\documentclass[11pt]{amsart}

\usepackage{amssymb}
\usepackage[abbrev]{amsrefs}

\allowdisplaybreaks
\numberwithin{equation}{section}
\newcommand{\N}{\mathbb N}
\newcommand{\R}{\mathbb R}
\theoremstyle{plain}
\newtheorem{lemma}{Lemma}[section]
\newtheorem{theorem}[lemma]{Theorem}
\newtheorem{corollary}[lemma]{Corollary}
\newtheorem{example}[lemma]{Example}
\theoremstyle{remark}
\newtheorem{remark}[lemma]{Remark}

\title[Fixed point theorem for a Meir-Keeler type mapping]%
{Fixed point theorem for a Meir-Keeler type mapping in a metric
space with a transitive relation}

\author{Koji~Aoyama}
\address[K.~Aoyama]
{Aoyama Mathematical Laboratory,
Konakadai, Inage-ku, Chiba, Chiba 263-0043, Japan}
\email{aoyama@bm.skr.jp}

\author{Masashi~Toyoda}
\address[M.~Toyoda]
{Department of Information Science, Toho University, 
Miyama, Funabashi, Chiba 274-8510, Japan}
\email{mss-toyoda@is.sci.toho-u.ac.jp}

\keywords{Meir-Keeler type mapping, fixed point, transitive relation}
\subjclass[2010]{47H09}

\begin{document}

\begin{abstract}
 The aim of this paper is to provide characterizations of a Meir-Keeler 
 type mapping and a fixed point theorem for the mapping 
 in a metric space endowed with a transitive relation. 
\end{abstract}

\maketitle

\section{Introduction}

Let $X$ be a metric space with metric $d$,
$R$ a subset of $X\times X$, and $T\colon X \to X$ a mapping. 
We say that $T$ is a \emph{Meir-Keeler type} mapping on $R$
if for any $\epsilon > 0$ there exists $\delta >0$ such that
\[
(x,y)\in R \text{ and } \epsilon \leq d(x,y) < \epsilon + \delta 
\text{ imply } d(Tx,Ty) < \epsilon.  
\]
This mapping is based on a mapping introduced in Meir and
Keeler~\cite{MR0250291}. 
Indeed, a Meir-Keeler type mapping $T$ on $X\times X$ is 
a \emph{weakly uniformly strict contraction} in the sense of
\cite{MR0250291}, which is often called a 
\emph{Meir-Keeler contraction}. 

In Section~\ref{s:characterization}, 
we provide some characterizations of a Meir-Keeler type mapping
(Theorem~\ref{t:MKC-char-R}). 
The result includes characterizations of a Meir-Keeler contraction by 
Wong~\cite{MR644645}, Lim~\cite{MR1845580}, and Gavruta et
al.~\cite{gavruta2014two}. 

In Section~\ref{s:fpt}, we establish a fixed point theorem for a
Meir-Keeler type mapping (Theorem~\ref{t:fpt}) in a metric space
endowed with a transitive relation. 
The result is related to the study of Ben-El-Mechaiekh~\cite{MR3346760}
and fixed point theorems in a metric space with a
partial order proved in 
Ran and Reurings~\cite{MR2053350}, 
Nieto and Rodr\'{\i}guez-L\'{o}pez~\cite{MR2212687}, and
Reich and Zaslavski~\cite{reich2017monotone}.

\section{Preliminaries}

Throughout the present paper, 
$\N$ denotes the set of positive integers, 
$\R$ the set of real numbers, 
and $\R_+$ the set of nonnegative real numbers.

A function $l \colon \R_+ \to \R_+$ 
is said to be of \emph{type (L)}
if for any $s>0$ there exists $\delta >0$ such that
$l(t) \leq s$ for all $t \in [s, s+\delta]$. 
It is clear that if a function $l\colon \R_+ \to \R_+$ is of type (L),
then $l(t) \leq t$ for all $t > 0$. 

\begin{remark}
 A mapping of type (L) above is based on an \textit{L}-function 
 introduced in \cite{MR1845580}. 
 We say that a function $l \colon \R_+ \to \R_+$ 
 is an \emph{\textit{L}-function} \cite{MR1845580}
 if $l(0)=0$, $l(s)>0$ for all $s>0$, and $l$ is of type (L). 
\end{remark}

We say that a function $w\colon \R_+ \to \R$ is 
\emph{right lower semicontinuous}
at $t_0\in \R_+$ if
for any $\epsilon > 0$ there exists $\delta > 0$ such that
$w(t_0) - \epsilon < w(s)$ for all $s \in [t_0, t_0 + \delta)$; 
a function $\psi\colon \R_+ \to \R$ is 
\emph{right upper semicontinuous} at $t_0 \in \R_+$
if $-\psi$ is right lower semicontinuous at $t_0$. 
It is clear that if $w\colon \R_+ \to \R$ is a nondecreasing
function, then $w$ is right lower semicontinuous at any $t \in \R_+$. 
It is known that a function $w\colon \R_+ \to \R$ is 
right lower semicontinuous at $t_0\in \R_+$ if and only if
$w(t_0) \leq \liminf_n w(s_n)$ 
whenever $\{s_n\}$ is a sequence in $[t_0,\infty)$ 
such that $s_n \to t_0$. 

\section{Characterizations of a Meir-Keeler type mapping}
\label{s:characterization}

The aim of this section is to prove the following theorem, 
which provides characterizations of a Meir-Keeler type mapping
defined on a metric space endowed with a transitive relation. 

\begin{theorem}\label{t:MKC-char-R}
 Let $X$ be a metric space with metric $d$,
 $T\colon X \to X$ a mapping, 
 and $R$ a nonempty subset of $X \times X$. 
 Then the following are equivalent: 
 \begin{enumerate}
  \item $T$ is a Meir-Keeler type mapping on $R$, that is, 
	for any $\epsilon > 0$ there exists $\delta >0$ such that
	$(x,y)\in R$ and $\epsilon \leq d(x,y) < \epsilon + \delta$ imply
	$d(Tx,Ty) < \epsilon$;
  \item for any $\epsilon > 0$ there exists $\delta >0$ such that
	$(x,y)\in R$ and $d(x,y) < \epsilon + \delta$
	imply $d(Tx,Ty) < \epsilon$; 
  \item \label{Gamma:R}
       there exists a nondecreasing function 
       $\gamma \colon \R_+ \to [0,\infty]$ such that 
       $\gamma(s) > s$ for all $s>0$ and 
       $\gamma\bigl( d(Tx,Ty) \bigr) \leq d(x,y)$ for all $(x,y) \in R$;
  \item \label{Wong:R}
	there exists a function $w\colon \R_+ \to \R_+$ such that 
	$w(s) > s$ for all $s>0$, 
	$w$ is right lower semicontinuous on $(0,\infty)$, 
	and 
	$w\bigl( d(Tx,Ty) \bigr) \leq d(x,y)$ for all $(x,y) \in R$;
  \item \label{Lim:R}
	there exists a function $l\colon (0,\infty) \to \R_+$ of 
	type~(L) such that $d(Tx,Ty) < l \bigl( d(x,y) \bigr)$ for all 
	$(x,y) \in R$ with $x \ne y$;
  \item \label{Phi-Psi:R}
	there exist a nondecreasing function 
	$\phi \colon \R_+ \to [0,\infty]$ and a function 
	$\psi \colon \R_+ \to \R_+$ such that 
	$\psi$ is right upper semicontinuous on $(0,\infty)$, 
	$\phi(t) > \psi(t)$ for all $t >0$, and 
	$\phi \bigl( d(Tx,Ty) \bigr) \leq \psi \bigl( d(x,y) \bigr)$
	for all	$(x,y) \in R$. 
 \end{enumerate}
 Moreover, in \eqref{Lim:R}, one can choose $l$ to be a right continuous
 and nondecreasing function such that $l(s) > 0$ for all $s>0$. 
\end{theorem}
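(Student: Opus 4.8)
The plan is to prove the six conditions equivalent by running a single cycle of implications, and to defer the delicate ``moreover'' claim to the one step that actually produces the function $l$ of type~(L). Concretely, I would establish $(1)\Leftrightarrow(2)$, then $(2)\Rightarrow(3)\Rightarrow(6)\Rightarrow(1)$ and $(3)\Rightarrow(4)\Rightarrow(6)$, so that $(1),(2),(3),(4),(6)$ are seen to be equivalent; I would then close the loop through the $L$-function by proving $(3)\Rightarrow(5)\Rightarrow(1)$, arranging the $l$ produced in $(3)\Rightarrow(5)$ so that it already carries the extra regularity asserted in the last sentence.

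Most of these links are short. For $(1)\Rightarrow(2)$ I would first note that $(1)$ forces the strict contraction $d(Tx,Ty)<d(x,y)$ for every $(x,y)\in R$ with $x\neq y$ (apply $(1)$ with $\epsilon=d(x,y)$); then, given $\epsilon>0$, the very $\delta$ produced by $(1)$ already works for $(2)$, since pairs with $d(x,y)<\epsilon$ are handled by the contraction and pairs with $\epsilon\le d(x,y)<\epsilon+\delta$ by $(1)$. For $(2)\Rightarrow(3)$ I would set $\gamma(0)=0$ and, for $s>0$, $\gamma(s)=\sup\{\rho>0:(x,y)\in R,\ d(x,y)<\rho \Rightarrow d(Tx,Ty)<s\}$; monotonicity is immediate, $(2)$ gives $\gamma(s)\ge s+\delta>s$, and a one-line contradiction yields $\gamma\bigl(d(Tx,Ty)\bigr)\le d(x,y)$. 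The implication $(3)\Rightarrow(6)$ is trivial with $\phi=\gamma$ and $\psi=\mathrm{id}$, while $(6)\Rightarrow(1)$ follows by fixing $\epsilon$, using right upper semicontinuity of $\psi$ at $\epsilon$ together with $\phi(\epsilon)>\psi(\epsilon)$ to find $\delta$ with $\psi(s)<\phi(\epsilon)$ on $[\epsilon,\epsilon+\delta)$, and reading off $\phi\bigl(d(Tx,Ty)\bigr)\le\psi\bigl(d(x,y)\bigr)<\phi(\epsilon)$, whence $d(Tx,Ty)<\epsilon$ by monotonicity of $\phi$. For $(3)\Rightarrow(4)$ I would truncate, e.g.\ $w(s)=\min\{\gamma(s),2s\}$, which is real-valued, nondecreasing (hence right lower semicontinuous), satisfies $w(s)>s$, and preserves the inequality. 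For $(4)\Rightarrow(6)$ I would monotonize by $\phi(s)=\inf_{u\ge s}w(u)$ and $\psi=\mathrm{id}$; here right lower semicontinuity of $w$ is exactly what keeps $\phi(s)>s$: one has $\phi(s)\ge s$, and if $\phi(s)=s$ there are $u_n\ge s$ with $u_n\to s$ and $w(u_n)\to s$, contradicting $w(s)\le\liminf_n w(u_n)$ together with $w(s)>s$.

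The substantive step is $(3)\Rightarrow(5)$ with the ``moreover'' clause. Working from the now-established Meir-Keeler condition, I would build $l$ from the supremum-of-images function $N(t)=\sup\{d(Tx,Ty):(x,y)\in R,\ d(x,y)\le t\}$ (equivalently, as a generalized inverse of $\gamma$). One checks quickly that $N$ is nondecreasing, that $N(t)\le t$, that $d(Tx,Ty)\le N\bigl(d(x,y)\bigr)$, and---crucially---that $N$ is of type~(L): condition $(2)$ with $\epsilon=s$ supplies a $\delta$ with $N(t)\le s$ for all $t\in[s,s+\delta]$. Passing to the right-continuous envelope $t\mapsto\inf_{u>t}N(u)$ preserves all of these, and on the stretches where no pair constrains $l$ from below one is free to raise $l$ toward the diagonal, which also secures strict positivity. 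Finally $(5)\Rightarrow(1)$ is immediate: given $\epsilon$, type~(L) at $s=\epsilon$ yields $\delta$ with $l(t)\le\epsilon$ on $[\epsilon,\epsilon+\delta]$, so $\epsilon\le d(x,y)<\epsilon+\delta$ forces $d(Tx,Ty)<l\bigl(d(x,y)\bigr)\le\epsilon$.

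I expect the main obstacle to be upgrading the inequality in $(5)$ from $\le$ to the \emph{strict} $d(Tx,Ty)<l\bigl(d(x,y)\bigr)$ while simultaneously keeping $l$ of type~(L), nondecreasing, right continuous, and strictly positive. The naive inverse of $\gamma$ saturates: whenever a pair realizes equality $\gamma\bigl(d(Tx,Ty)\bigr)=d(x,y)$ it returns $l\bigl(d(x,y)\bigr)=d(Tx,Ty)$ exactly, so strictness fails there; and this cannot be cured by a single global push toward the diagonal, because on the stretches where $N$ genuinely touches the diagonal, type~(L) forces $l$ to stay flat at the diagonal. The correct $l$ must therefore be lifted to the diagonal on the unconstrained stretches (using the strict gap $\gamma(s)>s$, equivalently $d(Tx,Ty)<d(x,y)$) yet held down exactly on the flat stretches where $N$ meets the diagonal; checking that a single $l$ achieves all of type~(L), monotonicity, right continuity, positivity, and the strict domination of every image at once is where the real work lies.
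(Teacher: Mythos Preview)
Your cycle of implications is sound, and several steps are actually cleaner than the paper's: your $(3)\Rightarrow(4)$ via $w(s)=\min\{\gamma(s),2s\}$ avoids the paper's case split at $t_0=\inf\{t:\gamma(t)=\infty\}$, and your $(4)\Rightarrow(6)$ via $\phi(s)=\inf_{u\ge s}w(u)$ is a pleasant alternative to the paper's contradiction argument for $(4)\Rightarrow(2)$. The paper organizes matters differently---it abstracts to functions $f,g\colon K\to\R_+$ with $g^{-1}(0)\subset f^{-1}(0)$ and runs the chain $(2)\Rightarrow(3)\Rightarrow(4)\Rightarrow(2)$, $(1)\Leftrightarrow(2)$, $(3)\Rightarrow(6)\Rightarrow(1)$, and $(1)\Leftrightarrow(5)$---but your route reaches the same destination.

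The genuine gap is precisely where you place it: producing an $l$ of type~(L) that is nondecreasing, right continuous, strictly positive, \emph{and} strictly dominates every image. Your $N$-envelope plan does not close this: on stretches where $N$ meets the diagonal you cannot lift without destroying type~(L), yet those are exactly the stretches where strictness is in doubt, and you offer no concrete mechanism to reconcile the two. The paper abandons the envelope idea entirely. Following Suzuki, it rewrites $(1)$ as ``for every $\epsilon>0$ there is $\alpha(\epsilon)>0$ with $\epsilon\le g(x)<\epsilon+2\alpha(\epsilon)\Rightarrow f(x)<\epsilon$'', sets $\beta(t)=\inf\{\epsilon>0:t\le\epsilon+\alpha(\epsilon)\}$, and defines
\[
\phi_1(t)=\begin{cases}\beta(t)&\text{if }\min\{\epsilon>0:t\le\epsilon+\alpha(\epsilon)\}\text{ exists},\\[2pt] \dfrac{\beta(t)+t}{2}&\text{otherwise.}\end{cases}
\]
The two-case definition is arranged so that for every $t>0$ one can actually \emph{select} an $\epsilon\in(0,\phi_1(t)]$ with $\epsilon\le t\le\epsilon+\alpha(\epsilon)$, whence $f(x)<\epsilon\le\phi_1\bigl(g(x)\bigr)$ strictly; a separate case analysis then shows $\phi_1$ is of type~(L). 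Only afterward does one monotonize and pass to the right-continuous envelope via $\phi_2(t)=\sup_{s\le t}\phi_1(s)$ and $l(t)=\inf_{s>t}\phi_2(s)$, which preserves the strict inequality already secured. This $\beta$--$\phi_1$ device is the missing idea in your plan; the rest of your argument goes through.
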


Obviously, Theorem~\ref{t:MKC-char-R} is valid in case of $R=X\times
X$. Therefore Theorem~\ref{t:MKC-char-R} provides characterizations 
of a Meir-Keeler contraction \cite{MR0250291} on a metric space. 

\begin{remark}
 The condition~(\ref{Gamma:R}) is related to the modulus of uniform
 continuity of $T$; see Lim~\cite{MR1845580}. 
 The conditions~(\ref{Wong:R}) and~(\ref{Lim:R}) are based on
 \cite{MR1845580}*{Theorem~1}; see also Wong~\cite{MR644645} 
 for~(\ref{Wong:R}). 
 The condition~(\ref{Phi-Psi:R}) comes from a \emph{weak type
 contraction} introduced in \cite{gavruta2014two}. 
\end{remark}

Theorem~\ref{t:MKC-char-R} above is a direct consequence of
Theorem~\ref{t:fg} below. 
We first prove it by using lemmas in Section~\ref{s:lemmas}. 

\begin{theorem}\label{t:fg}
 Let $K$ be a nonempty set and let $f\colon K \to \R_+$ and $g\colon K
 \to \R_+$ be functions. 
 Suppose that $g^{-1}(0) \subset f^{-1}(0)$. 
 Then the following are equivalent: 
 \begin{enumerate}
  \item \label{MK:fg}
	For any $\epsilon >0$ there exists $\delta > 0$ such
	that $x \in K$ and $\epsilon \leq g(x) < \epsilon + \delta$
	imply $f(x) < \epsilon$;
  \item \label{MKs:fg}
	for any $\epsilon >0$ there exists $\delta > 0$ such that
	$x\in K$ and $g(x) < \epsilon + \delta$ imply $f(x) < \epsilon$; 
  \item \label{Gamma:fg}
       there exists a nondecreasing function 
       $\gamma \colon \R_+ \to [0,\infty]$ such that 
       $\gamma(s) > s$ for all $s>0$ and 
       $\gamma\bigl( f(x) \bigr) \leq g(x)$ for all $x \in K$;
  \item \label{Wong-finite:fg}
       there exists a function $w\colon \R_+ \to \R_+$ such that 
       $w(s) > s$ for all $s>0$, 
       $w$ is right lower semicontinuous on $(0,\infty)$, 
       and $w\bigl( f(x) \bigr) \leq g(x)$ for all $x \in K$;
  \item \label{Lim:fg}
	there exists a function $l\colon (0,\infty) \to \R_+$ of type
	(L) such
	that $f(x) < l \bigl( g(x) \bigr)$ for all $x \in K$ with 
	$g(x) \ne 0$. 
  \item \label{Phi-Psi:fg}
	there exist a nondecreasing function $\phi \colon \R_+ \to
	[0,\infty]$ and a function $\psi \colon \R_+ \to \R_+$
	such that $\phi(t) > \psi(t)$ for all $t >0$, 
	$\psi$ is right upper semicontinuous on $(0,\infty)$, 
	and 
	$\phi \bigl( f(x) \bigr) \leq \psi \bigl( g(x) \bigr)$
	for all $x \in K$.
 \end{enumerate} 
 Moreover, 
 in \eqref{Lim:fg}, one can choose $l$ to be a right continuous and
 nondecreasing function such that $l(s) > 0$ for all $s>0$. 
\end{theorem}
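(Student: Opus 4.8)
The plan is to prove Theorem~\ref{t:fg} via a cycle of implications. Since the six conditions are equivalent, I would establish a chain that returns to the start, something like $(\ref{MK:fg}) \Rightarrow (\ref{Gamma:fg}) \Rightarrow (\ref{Wong-finite:fg}) \Rightarrow (\ref{MKs:fg}) \Rightarrow (\ref{MK:fg})$ to close the loop among the first four, and then weave in $(\ref{Lim:fg})$ and $(\ref{Phi-Psi:fg})$. The implications $(\ref{MKs:fg}) \Rightarrow (\ref{MK:fg})$ and $(\ref{Gamma:fg}) \Rightarrow (\ref{Wong-finite:fg})$ should be nearly immediate: the former is a weakening of the hypothesis, and for the latter one observes that a nondecreasing function is automatically right lower semicontinuous, so the only issue is that $\gamma$ may take the value $\infty$ while $w$ must be real-valued — I would patch this by truncating, e.g.\ replacing $\gamma$ by $\min\{\gamma, \text{something finite}\}$ or by composing with a bounded reparametrization that preserves $w(s) > s$.

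The substantive construction is the implication that produces the majorizing function, which I expect is the main obstacle. To get $(\ref{MK:fg}) \Rightarrow (\ref{Gamma:fg})$, I would, for each $s > 0$, use the Meir-Keeler condition to define $\gamma(s)$ as a supremum of those values $\epsilon + \delta$ for which the implication holds, or equivalently define $\gamma$ as a generalized inverse of the map sending $f$-values to $g$-values. Concretely, for $s > 0$ set $\gamma(s) = \sup\{\, u \ge 0 : g(x) \le u \text{ and } x \in K \text{ force } f(x) < s \,\}$ (with appropriate conventions making $\gamma$ nondecreasing and $[0,\infty]$-valued), and verify both $\gamma(f(x)) \le g(x)$ for all $x \in K$ and the strict inequality $\gamma(s) > s$; the latter is exactly where the Meir-Keeler $\delta$-slack is consumed. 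The hypothesis $g^{-1}(0) \subset f^{-1}(0)$ is what guarantees the construction behaves correctly at points where $g$ vanishes, so I would check that boundary case explicitly.

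For the $L$-function condition $(\ref{Lim:fg})$, the natural route is to build $l$ from the function supplied by $(\ref{MK:fg})$ or $(\ref{Gamma:fg})$: the type~(L) property is essentially the dual, stated in terms of the $g$-variable rather than the $f$-variable, of the Meir-Keeler condition, so one defines $l(t)$ as an infimum of admissible $f$-bounds over $g$-values near $t$. The closing remark, that $l$ may be taken right continuous, nondecreasing, and strictly positive, I would obtain by a final smoothing step — taking the right-continuous nondecreasing envelope $l^*(t) = \inf_{u > t} l(u)$ (or a similar regularization) and checking that type~(L), the strict inequality $f(x) < l(g(x))$, and positivity survive the passage. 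Finally, $(\ref{Phi-Psi:fg})$ is the most flexible condition since it allows two separate functions; I would deduce it from $(\ref{Gamma:fg})$ by taking $\phi = \gamma$ and $\psi = \mathrm{id}$, noting $\gamma(s) > s = \psi(s)$, and conversely recover one of the earlier conditions from $(\ref{Phi-Psi:fg})$ by exploiting the right upper semicontinuity of $\psi$ together with monotonicity of $\phi$ to extract a usable $\delta$. The delicate points throughout are the handling of the extended value $\infty$, the strictness of every inequality at every $s > 0$, and ensuring each reparametrization preserves the previous bound on $K$.
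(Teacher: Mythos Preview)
Your plan is correct and essentially matches the paper's approach: the paper too argues by a cycle of implications, constructs $\gamma(t) = \inf\{g(x) : x \in K,\ f(x) \ge t\}$ (which coincides with your sup-formulation) to obtain \eqref{Gamma:fg}, takes $\phi = \gamma$ and $\psi = \mathrm{id}$ for \eqref{Gamma:fg} $\Rightarrow$ \eqref{Phi-Psi:fg}, and produces the right-continuous, nondecreasing, positive $l$ by an explicit construction followed by an envelope $l(t) = \inf_{s>t}\phi_2(s)$ exactly as you suggest. Two small points where your outline is looser than the paper: first, the paper routes \eqref{MK:fg} $\Rightarrow$ \eqref{Gamma:fg} through \eqref{MKs:fg}, because verifying $\gamma(s) > s$ directly from \eqref{MK:fg} still requires handling the range $0 < g(x) < s$ (not only $g(x)=0$), and that sub-argument --- apply \eqref{MK:fg} at $\epsilon = g(x)$ to get $f(x) < g(x)$ --- is exactly the content of \eqref{MK:fg} $\Rightarrow$ \eqref{MKs:fg}, so you will end up proving it anyway. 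Second, you leave \eqref{Wong-finite:fg} $\Rightarrow$ \eqref{MKs:fg} unaddressed; it is not quite immediate, and the paper dispatches it by contradiction using a sequence with $g(x_n) < \epsilon + 1/n$, $f(x_n) \ge \epsilon$, and the right lower semicontinuity of $w$ at $\epsilon$.
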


\begin{proof}
 The implications \eqref{MKs:fg} $\Rightarrow$ \eqref{MK:fg}
 and \eqref{Gamma:fg} $\Rightarrow$ \eqref{Phi-Psi:fg} are clear. 
 Lemma~\ref{l:MK=Lim} shows that \eqref{MK:fg} and \eqref{Lim:fg} are
 equivalent,
 and that $l$ in \eqref{Lim:fg} can be chosen to be a right continuous and
 nondecreasing function such that $l(s) > 0$ for all $s>0$.
 Lemmas~\ref{l:mks2gamma}, \ref{l:gamma2w-finite}, 
 and~\ref{l:w-finite2MKs} show the implications
 \eqref{MKs:fg} $\Rightarrow$ \eqref{Gamma:fg}, 
 \eqref{Gamma:fg} $\Rightarrow$ \eqref{Wong-finite:fg}, 
 and \eqref{Wong-finite:fg} $\Rightarrow$ \eqref{MKs:fg}, 
 respectively. 
 Moreover, the implication \eqref{MK:fg} $\Rightarrow$ \eqref{MKs:fg}
 and \eqref{Phi-Psi:fg} $\Rightarrow$ \eqref{MK:fg}
 follow from Lemmas~\ref{l:MK2MKs} and~\ref{l:Phi-Psi2MK}, 
 respectively. 
 This completes the proof. 
\end{proof}

The following example shows that the implication \eqref{MK:fg}
$\Rightarrow$ \eqref{MKs:fg} in Theorem~\ref{t:fg} does not hold without
the assumption $g^{-1}(0) \subset f^{-1}(0)$. 

\begin{example}\label{e:MK/=MKs}
 Let $K = \{x\}$ be a singleton and 
 let $f\colon K \to \R_+$ and $g\colon K \to \R_+$ be functions
 defined by $f(x) = 1$ and $g(x)=0$.
 Then \eqref{MK:fg} in Theorem~\ref{t:fg} holds, 
 but \eqref{MKs:fg} in Theorem~\ref{t:fg} does not hold. 
\end{example}

\begin{proof}
 Let $\epsilon=1$. Then 
 $0 = g(x) < \epsilon + \delta$ and $f(x) \geq \epsilon$
 for all $\delta > 0$. Thus \eqref{MKs:fg} does not hold. 
 On the other hand, 
 let $\epsilon > 0$ and $\delta =1$. Then 
 $\{y \in K\colon \epsilon \leq g(y) < \epsilon + \delta \} = \emptyset$. 
 Therefore, \eqref{MK:fg} does hold. 
\end{proof}

\begin{remark}
 Let $K$, $f$, and $g$ be the same as in Example~\ref{e:MK/=MKs}
 and let $\phi \colon \R_+ \to [0,\infty]$ and 
 $\psi \colon \R_+ \to \R_+$ be functions defined by 
 $\phi (t) \equiv 1/2$ and 
 \[
 \psi (t) = \begin{cases}
	     1 & \text{ if } t=0; \\
	     1/4 & \text{otherwise}.
	    \end{cases}
 \]
 Then $\phi$ is nondecreasing, $\psi$ is right upper
 semicontinuous on $(0,\infty)$, and  $\phi(t)> \psi(t)$ for all
 $t>0$. Since 
 \[
 \phi \bigl( f(x) \bigr) = \phi(1) = 1/2 \leq 1 = \psi(0) 
 = \psi \bigl( g(x) \bigr), 
 \]
 it follows that 
 $\phi \bigl( f(y) \bigr) \leq \bigl( g(y) \bigr)$ for all $y \in K$. 
 Therefore  Example~\ref{e:MK/=MKs} also shows that the implication
 \eqref{Phi-Psi:fg} $\Rightarrow$ \eqref{MKs:fg} in Theorem~\ref{t:fg}
 does not hold without the assumption $g^{-1}(0) \subset f^{-1}(0)$. 
\end{remark}

Using Theorem~\ref{t:fg}, 
we can easily obtain Theorem~\ref{t:MKC-char-R}. 

\begin{proof}[Proof of Theorem~\ref{t:MKC-char-R}]
 Let $f\colon R \to \R_+$ and $g\colon R \to \R_+$ be functions defined
 by $f(x,y) = d(Tx, Ty)$ and $g(x,y)=d(x,y)$ for $(x,y) \in R$.
 Then it is clear that $g^{-1}(0) \subset f^{-1}(0)$. 
 Therefore Theorem~\ref{t:fg} implies the conclusion. 
\end{proof}

\section{Fixed point theorems} \label{s:fpt}

The aim of this section is to establish fixed point theorems for a
Meir-Keeler type mapping defined on a complete metric space endowed with 
a transitive relation or a partial order. 

\begin{theorem}\label{t:fpt}
 Let $X$ be a complete metric space with metric $d$,
 $T\colon X \to X$ a mapping, 
 and $R$ a nonempty subset of $X \times X$. Suppose that
 \begin{enumerate}
  \item \label{i:transitive}
	$(u,v) \in R$ and $(v,w) \in R$ imply $(u,w) \in R$; 
  \item \label{i:x}
	there exists $x \in X$ such that $(x,Tx) \in R$; 
  \item \label{i:R2R}
	$(Tu,Tv) \in R$ for all $(u,v)\in R$; 
  \item \label{i:MKC}
	for any $\epsilon >0$ there exists $\delta >0$ such that
	$(u,v) \in R$ and $\epsilon \leq d (u, v) < \epsilon + \delta$
	imply $d(Tu, Tv) < \epsilon$; 
  \item \label{i:pseudoc}
	if $\{x_n \}$ is a sequence in $X$ such that $x_n \to y$ and 
	$(x_n, x_{n+1}) \in R$ for all $n \in \N$, 
	then there exists a subsequence $\{x_{n_k}\}$ of $\{x_n\}$ 
	such that $T x_{n_k} \to Ty$ as $k \to \infty$. 
 \end{enumerate}
 Then $\{T^n x\}$ converges to a fixed point of $T$, that is, $T$ has a
 fixed point. Moreover, suppose that 
 \begin{enumerate}
  \setcounter{enumi}{5}
  \item \label{i:forally} $(x,y) \in R$ for all $y \in X$; 
  \item \label{i:Rclosed} $R$ is closed in $X\times X$. 
 \end{enumerate}
 Then $T$ has a unique fixed point. 
\end{theorem}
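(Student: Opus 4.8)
The plan is to analyze the Picard iterates $x_n = T^n x$, where $x$ is the point supplied by \eqref{i:x}. First I would record the two structural consequences of the relation being compatible with $T$: starting from $(x_0,x_1)=(x,Tx)\in R$ and applying \eqref{i:R2R} inductively gives $(x_n,x_{n+1})\in R$ for every $n$, and then transitivity \eqref{i:transitive} upgrades this to $(x_n,x_m)\in R$ for all $n<m$. I would also extract the one-step strict contractivity built into \eqref{i:MKC}: taking $\epsilon = d(u,v)$ for any $(u,v)\in R$ with $u\ne v$ shows $d(Tu,Tv) < d(u,v)$. Consequently, writing $c_n = d(x_n,x_{n+1})$, either some $x_n = x_{n+1}$ (in which case $x_n$ is a fixed point and $\{T^mx\}$ is eventually constant, so we are done), or $\{c_n\}$ is strictly decreasing and bounded below, hence $c_n \downarrow c$ for some $c\ge 0$. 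To see $c=0$, I would apply \eqref{i:MKC} with $\epsilon = c$ (assuming $c>0$): choosing $N$ with $c \le c_N < c+\delta$ and noting $(x_N,x_{N+1})\in R$ yields $c_{N+1} < c$, contradicting $c_{N+1}\ge c$.

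The main obstacle is the Cauchy estimate, which is the classical Meir–Keeler trick adapted to $R$. Given $\epsilon>0$, take $\delta$ from \eqref{i:MKC} and shrink it so that $\delta \le \epsilon$ (the implication only weakens as $\delta$ decreases). Using $c_n\to 0$, fix $N$ with $c_N < \delta$. I would then prove by induction on $m\ge N$ that $d(x_N,x_m) < \epsilon+\delta$. The base cases $m=N,N+1$ are immediate. For the step, assume $d(x_N,x_m)<\epsilon+\delta$: if $d(x_N,x_m)<\epsilon$, then $d(x_N,x_{m+1}) \le d(x_N,x_m)+c_m < \epsilon+\delta$ since $c_m \le c_N < \delta$; and if $\epsilon \le d(x_N,x_m) < \epsilon+\delta$, then because $(x_N,x_m)\in R$, condition \eqref{i:MKC} forces $d(x_{N+1},x_{m+1}) = d(Tx_N,Tx_m) < \epsilon$, whence $d(x_N,x_{m+1}) \le c_N + d(x_{N+1},x_{m+1}) < \delta+\epsilon$. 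This is exactly where transitivity is essential, since it guarantees $(x_N,x_m)\in R$. The bound $d(x_N,x_m)<\epsilon+\delta \le 2\epsilon$ for all $m\ge N$ gives, by the triangle inequality, $d(x_m,x_{m'}) < 4\epsilon$ for $m,m'\ge N$; as $\epsilon$ is arbitrary, $\{x_n\}$ is Cauchy and therefore converges to some $y\in X$ by completeness.

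For existence of the fixed point I would invoke \eqref{i:pseudoc} with this sequence: since $x_n\to y$ and $(x_n,x_{n+1})\in R$, there is a subsequence with $Tx_{n_k}\to Ty$. But $Tx_{n_k} = x_{n_k+1}$ is a subsequence of the convergent sequence $\{x_n\}$, so $Tx_{n_k}\to y$ as well; uniqueness of limits gives $Ty=y$. Thus $\{T^n x\}$ converges to the fixed point $y$.

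Finally, uniqueness under \eqref{i:forally} and \eqref{i:Rclosed} is short once existence is in hand. Let $q$ be any fixed point. By \eqref{i:forally}, $(x,q)=(x_0,q)\in R$, and applying \eqref{i:R2R} repeatedly (using $Tq=q$) yields $(x_n,q)\in R$ for all $n$. Since $x_n\to y$ we have $(x_n,q)\to(y,q)$ in $X\times X$, so closedness \eqref{i:Rclosed} gives $(y,q)\in R$. If $y\ne q$, the one-step strict contractivity established at the outset would give $d(y,q)=d(Ty,Tq)<d(y,q)$, a contradiction; hence $y=q$, and the fixed point is unique.
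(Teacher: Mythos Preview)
Your argument is correct, and it follows the same overall architecture as the paper (Picard iterates, $(x_m,x_n)\in R$ for $m<n$, Cauchy, limit is fixed via \eqref{i:pseudoc}, uniqueness via $(y,q)\in R$), but two steps are handled by genuinely different devices.

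For the Cauchy property, the paper argues by contradiction through three preparatory lemmas: first $d(x_n,x_{n+1})\to 0$, then a combinatorial lemma locating an index $j$ with $\epsilon+2\eta/3\le d(x_l,x_j)<\epsilon+\eta$, and finally a contradiction from the Meir--Keeler inequality applied at $(x_l,x_j)$. You instead run the classical direct induction, showing $d(x_N,x_m)<\epsilon+\delta$ for all $m\ge N$ by splitting on whether $d(x_N,x_m)<\epsilon$; this is shorter and avoids the auxiliary index-finding lemma, at the cost of the slightly cruder tail bound $4\epsilon$ rather than $2\epsilon$. For uniqueness, the paper appeals to the characterization theorem proved earlier to obtain a nondecreasing $\gamma$ with $\gamma(s)>s$ and $\gamma\bigl(d(Tz,Ty)\bigr)\le d(z,y)$, whereas you extract the strict contractivity $d(Tu,Tv)<d(u,v)$ directly from \eqref{i:MKC} with $\epsilon=d(u,v)$. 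Your route is more self-contained (it does not rely on the characterization result), while the paper's route illustrates how that characterization can be used downstream.
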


\begin{remark}
 The assumptions \eqref{i:forally} and \eqref{i:Rclosed} in
 Theorem~\ref{t:fpt} can be
 replaced by the following condition: 
 \begin{quote}
  If $y$ is a fixed point of $T$, and $\{x_n\}$ is a sequence in $X$
  such that $x_n \to z \in X$ and $(x_n,y)\in R$ for all $n \in \N$,
  then $(z,y) \in R$. 
 \end{quote}
\end{remark}

To prove Theorem~\ref{t:fpt}, we need lemmas below,
which are based on the results in \cite{MR0250291}*{\S2}. 

\begin{lemma}\label{l:nonincreasing}
 Let $X$ be a metric space with metric $d$,
 $T\colon X \to X$ a mapping, $x \in X$, and
 $\{x_n\}$ a sequence in $X$ defined by $x_n = T^n x$ for $n \in \N$.
 Suppose that for any $\epsilon >0$ there exists $\delta >0$ such that
 \begin{equation}\label{e:MKseq}
  n \in \N, \,
   \epsilon \leq d (x_n, x_{n+1}) < \epsilon + \delta
   \Rightarrow d(x_{n+1}, x_{n+2}) < \epsilon.
 \end{equation}
 Then $\left\{ d ( x_n, x_{n+1} ) \right\}$ is nonincreasing
 and $\lim_n d ( x_n, x_{n+1} ) = 0$. 
\end{lemma}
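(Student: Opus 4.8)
The plan is to abbreviate $a_n = d(x_n, x_{n+1})$ and establish the two assertions in turn, proving the monotonicity of $\{a_n\}$ first and then using it to force the limit to be zero. Both steps rest on a single, direct invocation of the hypothesis~\eqref{e:MKseq} at a well-chosen value of $\epsilon$.

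For the monotonicity, I would fix $n \in \N$ and show $a_{n+1} \le a_n$ by splitting into two cases. If $a_n = 0$, then $x_n = x_{n+1}$, whence $x_{n+1} = T x_n = T x_{n+1} = x_{n+2}$, so that $a_{n+1} = 0 = a_n$. If $a_n > 0$, I would apply~\eqref{e:MKseq} with $\epsilon = a_n$ to obtain a $\delta > 0$; the premise $\epsilon \le d(x_n, x_{n+1}) < \epsilon + \delta$ holds trivially because $d(x_n, x_{n+1}) = a_n = \epsilon$, so the conclusion yields $a_{n+1} = d(x_{n+1}, x_{n+2}) < \epsilon = a_n$. In either case $a_{n+1} \le a_n$, and hence $\{a_n\}$ is nonincreasing.

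For the limit, since $\{a_n\}$ is nonincreasing and bounded below by $0$, it converges to some $L \ge 0$, and necessarily $a_n \ge L$ for every $n$. I would argue by contradiction, supposing $L > 0$. Applying~\eqref{e:MKseq} with $\epsilon = L$ produces a $\delta > 0$; since $a_n \to L$ from above, there is an $N$ with $a_n < L + \delta$ for all $n \ge N$, and combined with $a_n \ge L$ this gives $L \le a_n < L + \delta$ for such $n$. The hypothesis then forces $a_{n+1} < L$, which contradicts $a_{n+1} \ge L$. Therefore $L = 0$, i.e.\ $\lim_n a_n = 0$.

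The only point requiring genuine care is the degenerate case $a_n = 0$ in the monotonicity step: the Meir--Keeler premise in~\eqref{e:MKseq} demands $\epsilon > 0$, so it cannot be applied directly when two consecutive iterates coincide, and one must instead use that $T$ maps equal points to equal points. Apart from this, the argument is a routine combination of monotone convergence with one application of the hypothesis at $\epsilon = L$, so I do not expect any substantial obstacle.
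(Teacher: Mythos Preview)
Your proposal is correct and follows essentially the same approach as the paper's proof: the same case split on whether $d(x_n,x_{n+1})=0$ for the monotonicity step, and the same contradiction argument applying \eqref{e:MKseq} at $\epsilon = L$ for the limit. The only differences are cosmetic (your abbreviation $a_n$ and slightly more explicit phrasing of why $a_n \ge L$).
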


\begin{proof}
 Suppose that $d(x_m, x_{m+1}) = 0$. Then $x_m = x_{m+1}$.
 Thus we have $x_{m+1} = T^{m+1}x = T x_m =Tx_{m+1} =x_{m+2}$,
 and hence $d(x_{m+1}, x_{m+2}) = 0$. 
 On the other hand, suppose that $\epsilon = d(x_m, x_{m+1}) > 0$. 
 Then there exists $\delta > 0$ such that \eqref{e:MKseq} holds.
 Thus we have $d(x_{m+1},x_{m+2}) < \epsilon = d(x_m, x_{m+1})$. 
 Consequently, we know that 
 $\left\{ d ( x_n, x_{n+1} ) \right\}$ is nonincreasing, 
 and hence $\lim_n d ( x_n, x_{n+1} )$ exists. 
 Suppose that $\epsilon = \lim_n d(x_n, x_{n+1}) > 0$. 
 Then there exists $\delta > 0$ such that 
 \eqref{e:MKseq} holds. 
 Since $d(x_n, x_{n+1}) \searrow \epsilon$, there exists $k \in \N$ such
 that $\epsilon \leq d(x_k,x_{k+1}) < \epsilon + \delta$. 
 Thus we have $\epsilon \leq d(x_{k+1}, x_{k+2}) < \epsilon$, which is a
 contradiction. 
 Therefore, $\lim_n d(x_n,x_{n+1}) = \epsilon = 0$.
\end{proof}

\begin{lemma}\label{l:pre-cauchy}
 Let $X$ be a metric space with metric $d$, $\{x_n\}$ a sequence in $X$, 
 $l,m$ positive integers, and $\epsilon,\eta$ positive real numbers.
 Suppose that $l<m$, $\eta \leq \epsilon$, 
 $d(x_l, x_m) \geq 2 \epsilon$, and $d(x_i, x_{i+1}) < \eta/3$ for all
 $i\in \N$ with $l \leq i \leq m$. 
 Then there exists $j \in \N$ such that $l< j <m$ and
 $\epsilon + 2\eta/3 \leq d(x_l,x_j) < \epsilon + \eta$. 
\end{lemma}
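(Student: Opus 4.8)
The plan is to treat this as a discrete intermediate value statement for the scalar sequence of distances from $x_l$. First I would set $a_i = d(x_l, x_i)$ for $l \le i \le m$ and record the three facts that drive the argument: $a_l = 0$, $a_m = d(x_l, x_m) \ge 2\epsilon$, and the increments are uniformly small. For the last point, the reverse triangle inequality gives
\[
  |a_{i+1} - a_i| = \bigl| d(x_l, x_{i+1}) - d(x_l, x_i) \bigr| \le d(x_i, x_{i+1}) < \eta/3
\]
for every $i$ with $l \le i < m$, using the hypothesis on consecutive terms. Thus $\{a_i\}$ marches from $0$ up past $2\epsilon$ in steps smaller than $\eta/3$, while the target window $[\epsilon + 2\eta/3, \epsilon + \eta)$ has width exactly $\eta/3$, so the sequence cannot leap over it.

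Next I would make this precise by letting $j$ be the \emph{smallest} index in $\{l, l+1, \dots, m\}$ with $a_j \ge \epsilon + 2\eta/3$. Since $a_l = 0 < \epsilon + 2\eta/3$, this forces $j > l$; and since $a_m \ge 2\epsilon > \epsilon + 2\eta/3$ (the strict inequality uses $\eta \le \epsilon$, hence $2\eta/3 < \epsilon$), such a $j$ exists with $j \le m$. Minimality gives $a_{j-1} < \epsilon + 2\eta/3$, and because $j - 1 \ge l$ the increment bound applies at $i = j-1$, so
\[
  a_j \le a_{j-1} + d(x_{j-1}, x_j) < (\epsilon + 2\eta/3) + \eta/3 = \epsilon + \eta.
\]
Together with $a_j \ge \epsilon + 2\eta/3$ this yields the desired two-sided bound $\epsilon + 2\eta/3 \le d(x_l, x_j) < \epsilon + \eta$.

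It remains to confirm the strict inequalities $l < j < m$. The lower bound $l < j$ is already secured above. For $j < m$ I would argue by contradiction: if $j = m$ then the upper estimate gives $a_m < \epsilon + \eta$, but $\eta \le \epsilon$ forces $\epsilon + \eta \le 2\epsilon \le a_m$, a contradiction; hence $j < m$. The only point requiring genuine care is this endpoint bookkeeping — the hypothesis $\eta \le \epsilon$ is used twice, once to place $\epsilon + 2\eta/3$ strictly below $2\epsilon$ (so the threshold is actually crossed before $x_m$), and once to push $\epsilon + \eta$ below $2\epsilon$ (so $j = m$ is impossible). There is no real analytic obstacle here; the entire content is the monotone crossing of a slowly varying sequence, and the factors $2\eta/3$ and $\eta/3$ are chosen precisely so both strict endpoint conditions fall out of $\eta \le \epsilon$.
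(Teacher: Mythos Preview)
Your proof is correct and follows essentially the same approach as the paper's: both arguments pick $j$ as the least index for which $d(x_l,x_j)\ge \epsilon+2\eta/3$ and then use the step bound $d(x_{j-1},x_j)<\eta/3$ together with minimality to obtain $d(x_l,x_j)<\epsilon+\eta$. The only cosmetic difference is that the paper restricts the search to $l<i<m$ from the outset (showing $m-1$ lies in this set), whereas you search over $\{l,\dots,m\}$ and exclude the endpoints $j=l$ and $j=m$ afterward.
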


\begin{proof}
 Set $A= \{ i \in \N\colon l < i <m,\, 
 \epsilon + 2\eta/3 \leq d (x_l, x_i)\}$. 
 We first show that $m -1 \in A$. 
 Suppose that $m-1 \leq l$. Then $m = l+1$, and we have
 \[
 2 \epsilon \leq d(x_l, x_m) = d(x_l, x_{l+1}) < \eta/3 \leq \epsilon/3,
 \]
 which is a contradiction. Thus $l < m-1$. Moreover, we have
 \[
 d(x_l, x_{m-1}) \geq d(x_l,x_m) - d(x_m, x_{m-1})
 \geq 2\epsilon - \eta/3 \geq \epsilon + 2\eta/3.
 \]
 Therefore, $m-1 \in A$, and hence $A$ is nonempty. 

 Set $j = \min A$. 
 Suppose that $l \geq j-1$. Then $j=l+1$. Thus we have 
 $\epsilon + 2\eta/3 \leq d(x_l, x_j) = d(x_l,x_{l+1}) < \eta/3$, 
 which is a contradiction. 
 Therefore, 
 $l < j-1 < j < m$. 
 Since $j-1 \notin A$, we have 
 $d(x_l, x_{j-1}) < \epsilon +2\eta/3$, and hence
 \[
 d(x_l,x_j) \leq  d(x_l,x_{j-1}) + d(x_{j-1},x_j)
 < \epsilon + 2\eta/3 + \eta/3 = \epsilon + \eta. 
 \]
 As a result, we conclude that 
 $l< j <m$ and
 $\epsilon + 2\eta/3 \leq d(x_l,x_j) < \epsilon + \eta$. 
\end{proof}

\begin{lemma} \label{l:cauchy}
 Let $X$, $T$, $x$, and $\{x_n\}$ be the same as in
 Lemma~\ref{l:nonincreasing}. 
 Suppose that for any $\epsilon >0$ there exists $\delta >0$ such that
 \begin{equation}\label{e:MKseq2}
  i, j \in \N, \,
   \epsilon \leq d (x_i , x_j ) < \epsilon + \delta
   \Rightarrow d(x_{i+1}, x_{j+1}) < \epsilon.
 \end{equation}
 Then $\{x_n\}$ is a Cauchy sequence. 
\end{lemma}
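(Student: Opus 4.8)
The plan is to argue by contradiction, after first harvesting the fact that consecutive distances tend to zero. Observe that specializing \eqref{e:MKseq2} to $j=i+1$ yields exactly the hypothesis \eqref{e:MKseq} of Lemma~\ref{l:nonincreasing}; hence $\{d(x_n,x_{n+1})\}$ is nonincreasing and $\lim_n d(x_n,x_{n+1})=0$. I would then suppose, for contradiction, that $\{x_n\}$ is not a Cauchy sequence, so that there is an $\epsilon_0>0$ with the property that for every $N\in\N$ one can find indices $m>l\ge N$ satisfying $d(x_l,x_m)\ge \epsilon_0$. Writing $\epsilon=\epsilon_0/2$, so that the separation threshold reads $d(x_l,x_m)\ge 2\epsilon$, prepares the ground for Lemma~\ref{l:pre-cauchy}.

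Next I would feed this $\epsilon$ into the Meir-Keeler hypothesis \eqref{e:MKseq2} to obtain $\delta>0$, and set $\eta=\min\{\delta,\epsilon\}$, so that $0<\eta\le\epsilon$ and $\eta\le\delta$. Since $d(x_n,x_{n+1})\to 0$, there is an $N$ with $d(x_n,x_{n+1})<\eta/3$ for all $n\ge N$. Applying the failure of the Cauchy property at this $N$ produces indices $m>l\ge N$ with $d(x_l,x_m)\ge 2\epsilon$; because every index $i$ with $l\le i\le m$ exceeds $N$, the hypotheses of Lemma~\ref{l:pre-cauchy} are met. That lemma then delivers an intermediate index $j$ with $l<j<m$ and $\epsilon+2\eta/3\le d(x_l,x_j)<\epsilon+\eta$.

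The contradiction is now one step away. Since $\epsilon\le d(x_l,x_j)<\epsilon+\eta\le\epsilon+\delta$, the Meir-Keeler implication \eqref{e:MKseq2} gives $d(x_{l+1},x_{j+1})<\epsilon$. Combining this with $d(x_l,x_{l+1})<\eta/3$ and $d(x_j,x_{j+1})<\eta/3$ (both valid as $l,j\ge N$) via the triangle inequality yields $d(x_l,x_j)\le d(x_l,x_{l+1})+d(x_{l+1},x_{j+1})+d(x_{j+1},x_j)<\epsilon+2\eta/3$, contradicting the lower bound $d(x_l,x_j)\ge\epsilon+2\eta/3$ supplied by Lemma~\ref{l:pre-cauchy}. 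Hence $\{x_n\}$ must be Cauchy.

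The delicate point---and the reason Lemma~\ref{l:pre-cauchy} is phrased with the precise buffers $\eta/3$ and the window $[\epsilon+2\eta/3,\epsilon+\eta)$---is the calibration that makes the final triangle-inequality estimate close strictly below the very lower bound it must contradict. Choosing $\eta\le\delta$ is what guarantees that the intermediate distance $d(x_l,x_j)$ lands inside the Meir-Keeler window $[\epsilon,\epsilon+\delta)$, so that a single application of the contraction step is admissible; and the symmetric $\eta/3$ slack at both endpoints $l$ and $j$ is exactly what that one step can absorb. I expect this bookkeeping to be the only real obstacle; once the constants are lined up, the remainder is routine.
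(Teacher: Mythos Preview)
Your argument is correct and follows essentially the same route as the paper: after invoking Lemma~\ref{l:nonincreasing} to get $d(x_n,x_{n+1})\to 0$, you negate the Cauchy condition, set $\eta=\min\{\delta,\epsilon\}$, and use Lemma~\ref{l:pre-cauchy} to locate an intermediate index $j$ with $d(x_l,x_j)\in[\epsilon+2\eta/3,\epsilon+\eta)$, then apply \eqref{e:MKseq2} once and close with the triangle inequality---exactly as in the paper's proof. The only differences are cosmetic (index names, and your explicit halving $\epsilon=\epsilon_0/2$ versus the paper's directly writing the non-Cauchy separation as $\ge 2\epsilon$).
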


\begin{proof}
 Suppose that $\{x_n\}$ is not a Cauchy sequence. 
 Then there exists $\epsilon> 0$ such that 
 for each $i \in \N$ there exist $m_i,n_i \in \N$ such that 
 \begin{equation}\label{e:not-cauchy}
  i \leq m_i < n_i \text{ and } d(x_{m_i}, x_{n_i}) \geq 2 \epsilon. 
 \end{equation}
 By assumption, we know that there exists $\delta>0$ such that 
 \eqref{e:MKseq2} holds. 
 Set $\eta = \min \{ \delta, \epsilon\}$. 
 Since $d(x_n,x_{n+1}) \searrow 0$ by Lemma~\ref{l:nonincreasing},
 it follows from~\eqref{e:not-cauchy} that there exist $m, n \in \N$
 with $m<n$ such that $d(x_m, x_n) \geq 2 \epsilon$ and
 \begin{equation}\label{e:eta3}
  d(x_i, x_{i+1}) < \eta/3
 \end{equation}
 for all $i \in \N$ with $i\geq m$.   
 Thus Lemma~\ref{l:pre-cauchy} shows that 
 there exists $j \in \N$ such that $m<j<n$ and 
 \[
  \epsilon + 2\eta/3 \leq d (x_m, x_j) < \epsilon + \eta.   
 \]
 As a result, we see that 
 $\epsilon \leq d (x_m, x_j) < \epsilon + \delta$. 
 Taking into account \eqref{e:eta3} and \eqref{e:MKseq2}, we have
 \begin{align*}
  \epsilon + 2\eta/3 \leq d(x_m, x_j)
  &\leq  d(x_m, x_{m+1}) + d(x_{m+1}, x_{j+1}) + d(x_{j+1},x_j)\\
  &< \eta/3 + \epsilon + \eta/3 =  \epsilon + 2\eta/3, 
 \end{align*}
 which is a contradiction. Therefore, $\{x_n\}$ is a Cauchy sequence. 
\end{proof}

Now we prove Theorem~\ref{t:fpt}. 

\begin{proof}[Proof of Theorem~\ref{t:fpt}]
 Let $\{x_n\}$ be a sequence in $X$ defined by $x_n = T^n x$ for $n \in
 \N$. 
 Then, by the assumptions~(\ref{i:transitive}), (\ref{i:x}), and
 (\ref{i:R2R}),
 we see that $(x_m, x_n) \in R$ for all $m,n \in \N$ 
 with $m < n$.
 Thus it follows from the assumption~(\ref{i:MKC}) that
 for any $\epsilon >0$ there exists $\delta >0$ such that
 \eqref{e:MKseq2} holds. 
 Since $X$ is complete, Lemma~\ref{l:cauchy} shows that
 $\{x_n\}$ converges to some point $z \in X$. 
 We show that $z$ is a fixed point of $T$. 
 By virtue of the assumption \eqref{i:pseudoc}, there exists a
 subsequence $\{x_{n_k}\}$ of $\{x_n\}$ such that $Tx_{n_k} \to Tz$ as
 $k \to \infty$. Taking into account $x_{n_k + 1}\to z$, we conclude that
 \[
 d(Tz,z) \leq d(Tz, x_{n_k + 1}) + d(x_{n_k + 1}, z) 
 = d(Tz, Tx_{n_k}) + d(x_{n_k + 1}, z) \to 0
 \]
 as $k\to \infty$. Therefore, $Tz=z$, and hence $z$ is a fixed point of
 $T$. 

 We next show that $z$ is the unique fixed point of $T$
 under the assumptions \eqref{i:forally} and \eqref{i:Rclosed}. 
 Let $y$ be a fixed point of $T$. 
 Since $(x,y) \in R$ by~\eqref{i:forally}, it follows from~\eqref{i:R2R}
 that $(Tx,y) = (Tx, Ty) \in R$. 
 Therefore, $(T^n x,y) \in R$ for all $n \in \N$. 
 Since $T^n x \to z$ and $R$ is closed by~\eqref{i:Rclosed}, 
 we conclude that $(z,y) \in R$. 
 Using Theorem~\ref{t:MKC-char-R} 
 and the function $\gamma$ in Theorem~\ref{t:MKC-char-R} \eqref{i:R2R}, 
 we have
 \[
 \gamma \bigl( d(z,y) \bigr) = \gamma \bigl( d(Tz,Ty) \bigr) \leq d(z,y),
 \]
 and hence $z=y$. 
\end{proof}

Using Theorem~\ref{t:fpt}, we obtain the following: 

\begin{corollary}[%
 Nieto \& Rodr\'{i}guez-L\'{o}pez \cite{MR2212687}*{Theorem 2.2}]
 Let $X$ be a complete metric space with metric $d$,
 $T\colon X \to X$ a mapping, 
 and $\preceq$ a partial order in $X$. Suppose that
 \begin{enumerate}
  \item[(NR1)] there exists $x \in X$ such that $x \preceq Tx$; 
  \item[(NR2)] $Tu \preceq Tv$ for all $u,v\in X$ with $u \preceq v$;	
  \item[(NR3)] there exists $\theta \in [0,1)$ such that
	       $d(Tu, Tv) \leq \theta d(u,v)$ for all $u,v\in X$ with 
	       $u \preceq v$; 
  \item[(NR4)] if $\{x_n \}$ is a sequence in $X$ such that $x_n \to y$
	       and $x_n \preceq x_{n+1}$ for all $n \in \N$, 
	       then then $x_n \preceq y$  for all $n \in \N$. 
 \end{enumerate}
 Then $T$ has a fixed point.
\end{corollary}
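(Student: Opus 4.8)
The plan is to realize the partial order as the relation $R$ and then verify, one by one, the hypotheses of Theorem~\ref{t:fpt}. Concretely, I would set
\[
R = \{(u,v) \in X \times X : u \preceq v\}.
\]
With this choice, three of the five hypotheses are immediate: transitivity of $\preceq$ gives \eqref{i:transitive}, hypothesis (NR1) is precisely \eqref{i:x}, and hypothesis (NR2) is precisely \eqref{i:R2R}. So the genuine work lies in translating the metric hypotheses (NR3) and (NR4) into \eqref{i:MKC} and \eqref{i:pseudoc}.

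For \eqref{i:MKC} I would show that a contraction along $R$ is a Meir-Keeler type mapping on $R$. Given $\epsilon > 0$: if $\theta = 0$, then $d(Tu,Tv) = 0 < \epsilon$, so any $\delta > 0$ works; if $\theta \in (0,1)$, the choice $\delta = \epsilon(1-\theta)/\theta > 0$ forces $d(Tu,Tv) \leq \theta d(u,v) < \theta(\epsilon + \delta) = \epsilon$ whenever $(u,v) \in R$ and $d(u,v) < \epsilon + \delta$. This is routine, needing only the separate treatment of the degenerate case $\theta = 0$.

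The step I expect to require the most care is \eqref{i:pseudoc}, since it is the only place where (NR4) enters and where the subsequence clause lives. Here I would argue as follows: given a sequence $\{x_n\}$ with $x_n \to y$ and $x_n \preceq x_{n+1}$ for all $n \in \N$, hypothesis (NR4) yields $x_n \preceq y$, that is $(x_n, y) \in R$, for every $n$; then (NR3) gives $d(Tx_n, Ty) \leq \theta d(x_n, y) \to 0$ as $n \to \infty$, since $x_n \to y$. Thus the whole sequence $\{Tx_n\}$ converges to $Ty$, and the required subsequence exists trivially, so \eqref{i:pseudoc} holds. The mild subtlety is recognizing that (NR4) is exactly what supplies comparability of each $x_n$ with the limit $y$, which is what allows the contraction estimate to be applied.

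With \eqref{i:transitive}--\eqref{i:pseudoc} in hand, Theorem~\ref{t:fpt} immediately produces a fixed point of $T$, completing the argument; note that the uniqueness half of Theorem~\ref{t:fpt}, and hence the hypotheses \eqref{i:forally} and \eqref{i:Rclosed}, are not needed here.
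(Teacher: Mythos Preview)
Your proposal is correct and mirrors the paper's own argument: both set $R=\{(u,v):u\preceq v\}$, read off \eqref{i:transitive}--\eqref{i:R2R} from the partial order and (NR1)--(NR2), derive \eqref{i:MKC} from (NR3), and verify \eqref{i:pseudoc} by combining (NR4) with (NR3) to get $d(Tx_n,Ty)\leq\theta d(x_n,y)\to 0$. The only cosmetic difference is that you spell out the $\epsilon$--$\delta$ verification of the Meir--Keeler condition, whereas the paper simply asserts that (NR3) implies \eqref{i:MKC}.
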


\begin{proof}
 Set $R = \{ (u,v) \in X \times X\colon u \preceq v\}$.
 Since $(x,Tx) \in R$ by~(NR1), we know that $R$ is a nonempty subset of
 $X\times X$ and the assumption~\eqref{i:x} in
 Theorem~\ref{t:fpt} holds. 
 The assumption \eqref{i:transitive} in Theorem~\ref{t:fpt} 
 is valid clearly. 
 The assumptions~\eqref{i:R2R} and~\eqref{i:MKC} in
 Theorem~\ref{t:fpt} follow from (NR2) and (NR3),
 respectively. 
 We must check the assumption~\eqref{i:pseudoc} in
 Theorem~\ref{t:fpt}. 
 Let $\{x_n \}$ be a sequence in $X$ such that $x_n \to y$ and 
 $(x_n, x_{n+1}) \in R$ for all $n \in \N$. 
 Taking into account (NR3) and (NR4), we see that 
 \[
  d(Tx_n, Ty) \leq \theta d(x_n,y) \to 0
 \]
 as $n \to \infty$. 
 Therefore Theorem~\ref{t:fpt} implies the conclusion. 
\end{proof}

Using Theorem~\ref{t:fpt}, we also deduce the following fixed point
theorem, which is similar to \cite{reich2017monotone}*{Theorem 1.2}. 

\begin{theorem}
 Let $Y$ be a complete metric space with metric $d$,
 $\preceq$ a partial order in Y, $X$ a nonempty closed subset of $Y$, 
 and $T\colon X \to X$ a mapping. Suppose that
 \begin{itemize}
  \item[(RZ0)] $\{(u,v) \in Y\times Y \colon u \preceq v\}$ is closed in
	       $Y\times Y$; 
  \item[(RZ1)] the graph of $T$ is closed in $Y\times Y$; 
  \item[(RZ2)] $Tu \preceq Tv$ for all $u,v \in X$ with $u \preceq v$;
  \item[(RZ3)] there exists a right upper semicontinuous function
	       $\psi\colon \R_+ \to \R_+$ such that $t > \psi(t)$ for
	       all $t > 0$ and $d(Tu,Tv) \leq \psi \bigl( d(u,v) \bigr)$
	       for all $u,v \in X$ with $u \preceq v$; 
  \item[(RZ4)] there exists $x \in X$ such that $x \preceq y$ for all
	       $y \in X$.
 \end{itemize}
 Then $\{T^n x\}$ converges to a unique fixed point of $T$. 
\end{theorem}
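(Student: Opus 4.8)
The plan is to deduce this from Theorem~\ref{t:fpt} applied to the closed subset $X$, which is itself a complete metric space since a closed subset of the complete space $Y$ is complete. First I would introduce the relation
\[
R = \{(u,v) \in X \times X \colon u \preceq v\}
\]
and verify the five hypotheses of the first part of Theorem~\ref{t:fpt}. The transitivity condition~\eqref{i:transitive} is immediate from transitivity of the partial order $\preceq$; assumption~\eqref{i:x} follows from (RZ4), which in particular yields $x \preceq Tx$ and hence $(x,Tx) \in R$ (so that $R$ is nonempty); and assumption~\eqref{i:R2R} is exactly (RZ2). For the Meir-Keeler condition~\eqref{i:MKC} I would invoke Theorem~\ref{t:MKC-char-R}: taking $\phi$ to be the identity on $\R_+$ (which is nondecreasing, with values in $[0,\infty]$) and $\psi$ as in (RZ3), condition~(RZ3) says precisely that $\phi(t) > \psi(t)$ for all $t>0$ and $\phi\bigl(d(Tu,Tv)\bigr) = d(Tu,Tv) \leq \psi\bigl(d(u,v)\bigr)$ for all $(u,v) \in R$, which is condition~\eqref{Phi-Psi:R} of Theorem~\ref{t:MKC-char-R} and is therefore equivalent to $T$ being a Meir-Keeler type mapping on $R$.

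The crux is the pseudo-continuity assumption~\eqref{i:pseudoc}. Here I would let $\{x_n\}$ be a sequence in $X$ with $x_n \to y$ and $(x_n, x_{n+1}) \in R$ for all $n$; since $X$ is closed and $x_n \in X$, the limit $y$ lies in $X$. Transitivity of $\preceq$ gives $(x_n, x_m) \in R$ for $n < m$, so (RZ3) yields $d(Tx_n, Tx_m) \leq \psi\bigl(d(x_n,x_m)\bigr) < d(x_n,x_m)$ whenever $d(x_n,x_m) > 0$, while $d(Tx_n, Tx_m) = 0$ otherwise; in either case $d(Tx_n, Tx_m) \leq d(x_n, x_m)$. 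As $\{x_n\}$ converges it is Cauchy, hence $\{Tx_n\}$ is Cauchy too and, by completeness of $X$, converges to some $b \in X$. Then $(x_n, Tx_n) \to (y,b)$ in $Y \times Y$, and since the graph of $T$ is closed by (RZ1), we obtain $b = Ty$; thus $Tx_n \to Ty$, and in particular~\eqref{i:pseudoc} holds with the whole sequence serving as the required subsequence. The first part of Theorem~\ref{t:fpt} then shows that $\{T^n x\}$ converges to a fixed point of $T$.

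For uniqueness I would check the two additional hypotheses. Assumption~\eqref{i:forally} is exactly (RZ4). For assumption~\eqref{i:Rclosed}, I would write
\[
R = \{(u,v) \in Y\times Y \colon u \preceq v\} \cap (X \times X);
\]
the first set is closed in $Y\times Y$ by (RZ0), and $X \times X$ is closed in $Y\times Y$ because $X$ is closed in $Y$, so $R$ is closed in $X \times X$. The second part of Theorem~\ref{t:fpt} then gives uniqueness of the fixed point, and combining with the first part shows that $\{T^n x\}$ converges to this unique fixed point.

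I expect the main obstacle to be the verification of~\eqref{i:pseudoc}. The key realization is that the contractive estimate in (RZ3), combined with transitivity, forces $\{Tx_n\}$ to be Cauchy rather than merely bounded, which is exactly what allows the closed-graph hypothesis (RZ1) to identify its limit as $Ty$; without this Cauchy property a closed graph alone would not deliver convergence of the images.
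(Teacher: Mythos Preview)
Your proof is correct, and the overall reduction to Theorem~\ref{t:fpt} matches the paper's. The genuine difference lies in how you verify assumption~\eqref{i:pseudoc}. You show that $\{Tx_n\}$ is Cauchy via the estimate $d(Tx_n,Tx_m)\leq d(x_n,x_m)$ from (RZ3), and then invoke the closed-graph hypothesis (RZ1) to identify its limit as $Ty$. The paper instead first uses the closedness of $R$ (from (RZ0)) to pass to the limit in $(x_m,x_n)\in R$ and obtain $(x_m,y)\in R$ for every $m$; it then applies (RZ3) \emph{directly} to the pair $(x_m,y)$, splitting into the cases where $x_m=y$ infinitely often versus $x_m\ne y$ eventually, and in the latter case gets $d(Tx_m,Ty)\leq\psi\bigl(d(x_m,y)\bigr)<d(x_m,y)\to 0$. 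A notable consequence is that the paper's argument never uses (RZ1) at all, so it in fact shows the hypothesis is redundant; your route is cleaner (no case split) but relies essentially on the closed graph.
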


\begin{proof}
 By assumption, it is clear that $X$ is complete. 
 Set $R = \{(u,v) \in X\times X \colon u \preceq v\}$. 
 By virtue of (RZ4), $(x, x) \in R$, and hence $R$ is nonempty. 
 Moreover, since $\preceq$ is a partial order,  
 the assumption \eqref{i:transitive} in Theorem~\ref{t:fpt} holds. 
 The assumptions \eqref{i:x} and \eqref{i:forally} in
 Theorem~\ref{t:fpt} follow from (RZ4); 
 the assumption \eqref{i:R2R} in Theorem~\ref{t:fpt} follows from
 (RZ2). 
 Since $X$ is closed, the assumption \eqref{i:Rclosed} in
 Theorem~\ref{t:fpt} is deduced from (RZ0). 
 Using Theorem~\ref{t:MKC-char-R}, 
 we know that (RZ3) implies
 the assumption \eqref{i:MKC} in Theorem~\ref{t:fpt}. 
 Therefore it is enough to verify the assumption \eqref{i:pseudoc} in
 Theorem~\ref{t:fpt}. 
 Let $\{x_n \}$ be a sequence in $X$ such that 
 $x_n \to y$ and $(x_n, x_{n+1}) \in R$ for all $n \in \N$. 
 Since $X$ is closed, it follows that $y \in X$. 
 Let $m \in \N$ be fixed. 
 Then it is easy to check that $(x_m, x_n) \in R$ for all $n \in \N$
 with $m \leq n$. 
 Since $\{(x_m, x_n) \}_{n\geq m}$ converges to $(x_m,y)$ in 
 $X\times X$ and $R$ is closed in $X\times X$,  
 we see that $(x_m, y) \in R$. 
 Hence $(x_m, y) \in R$ for all $m \in \N$. 
 Set $A = \{n \in \N \colon x_n = y \}$. 
 Suppose that $A$ is an infinite set.
 Then there exists a subsequence $\{x_{n_k}\}$ of $\{x_n\}$ 
 such that $x_{n_k} = y$ for all $k \in \N$, 
 and hence $Tx_{n_k} \to Ty$ as $k\to \infty$. 
 On the other hand, suppose that $A$ is not a infinite set. 
 Then there exists a subsequence $\{x_{n_k}\}$ of $\{x_n\}$ 
 such that $x_{n_k} \ne y$ for all $k \in \N$. 
 Since $(x_{n_k}, y) \in R$ and $d(x_{n_k}, y) > 0$ for all $k \in \N$,
 it follows from~(RZ3) that
 \[
 d(Tx_{n_k}, Ty) \le \psi \bigl( d(x_{n_k},y) \bigr) 
 < d(x_{n_k},y) \to 0
 \]
 as $k \to \infty$.  
 Therefore the assumption \eqref{i:pseudoc} in Theorem~\ref{t:fpt}
 holds.   
 Consequently, Theorem~\ref{t:fpt} implies the conclusion. 
\end{proof}

\section{Lemmas}\label{s:lemmas}

In this section, we prove lemmas which are used in 
the proof of Theorem~\ref{t:fg}. 

In what follows, let $K$ be a nonempty set and 
let $f\colon K \to \R_+$ and $g\colon K \to \R_+$ be functions. 

\begin{lemma}\label{l:MK=Lim}
 The conditions \eqref{MK:fg} and \eqref{Lim:fg} 
 in Theorem~\ref{t:fg} are equivalent. 
 Moreover, in \eqref{Lim:fg}, one can choose $l$ to be a right
 continuous and
 nondecreasing function such that $l(s) > 0$ for all $s>0$. 
\end{lemma}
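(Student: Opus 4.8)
The plan is to establish the two implications separately; the direction \eqref{Lim:fg} $\Rightarrow$ \eqref{MK:fg} is routine, while the converse, together with the ``moreover'' clause, carries all the content. For \eqref{Lim:fg} $\Rightarrow$ \eqref{MK:fg} I would fix $\epsilon > 0$ and apply the definition of type~(L) at $s = \epsilon$ to obtain $\delta > 0$ with $l(t) \le \epsilon$ for all $t \in [\epsilon, \epsilon + \delta]$; then $\epsilon \le g(x) < \epsilon + \delta$ forces $g(x) \ne 0$ and $g(x) \in [\epsilon, \epsilon + \delta]$, whence $f(x) < l(g(x)) \le \epsilon$, a one-line verification.

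For \eqref{MK:fg} $\Rightarrow$ \eqref{Lim:fg} I would first record the consequence of \eqref{MK:fg} obtained by taking $\epsilon = g(x)$: every $x$ with $g(x) > 0$ satisfies $f(x) < g(x)$. Then I introduce the majorant
\[
 \phi(s) = \sup\{ f(x) : x \in K,\ 0 < g(x) \le s \}, \qquad s > 0,
\]
(with $\sup\emptyset = 0$) and check four properties: $\phi$ is nondecreasing; $\phi(s) \le s$, since each competing value obeys $f(x) < g(x) \le s$; $f(x) \le \phi(g(x))$ whenever $g(x) > 0$; and $\phi$ is of type~(L). The last property is exactly where \eqref{MK:fg} enters: given $s > 0$, take $\delta$ from \eqref{MK:fg} for $\epsilon = s$; any $x$ with $0 < g(x) \le t < s + \delta$ has $f(x) < s$ (if $g(x) < s$ then $f(x) < g(x) < s$, and if $s \le g(x) < s + \delta$ then \eqref{MK:fg} applies), so $\phi(t) \le s$ for $t \in [s, s + \delta/2]$.

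It remains to upgrade $\phi$ to a function $l$ that is right continuous, nondecreasing, positive, and a strict majorant. I would pass to the right-continuous modification $\phi^+(s) = \inf_{t > s}\phi(t)$, which keeps monotonicity, the bound $\phi^+(s) \le s$, type~(L), and $f(x) \le \phi^+(g(x))$, and then interpolate with the identity. Where $\phi^+(s) < s$, setting $l(s) = \tfrac12\bigl(\phi^+(s) + s\bigr)$ yields strict domination, since $l(g(x)) \ge \tfrac12\bigl(f(x) + g(x)\bigr) > f(x)$ because $f(x) < g(x)$, as well as positivity, since $l(s) \ge s/2$; and here the boost is harmless for type~(L), because near a level $s_0$ with $\phi^+(s_0) < s_0$ the averaged value stays below $s_0$.

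The one real obstacle is reconciling this boost with type~(L) at the points where $\phi^+$ meets the diagonal $\{\,l(s) = s\,\}$: there the averaging overshoots, since $\tfrac12(s_0 + t) > s_0$ for $t > s_0$, and would destroy type~(L). The resolution is that at such a level \eqref{MK:fg} already supplies strictness for nothing: applying \eqref{MK:fg} with $\epsilon = s_0$ gives $f(x) < s_0$ for every $x$ whose $g(x)$ lies in a right-neighborhood of $s_0$, so on that neighborhood I keep the unboosted flat value $l = \phi^+ = s_0$, which satisfies $l \le s_0$ and hence respects type~(L) while still dominating $f$ strictly. Finally I would assemble the boosted and flat pieces into one nondecreasing, right-continuous function and verify type~(L) level by level; this produces an $l$ of the required form, establishing both the equivalence and the ``moreover'' clause.
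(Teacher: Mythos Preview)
Your argument for \eqref{Lim:fg} $\Rightarrow$ \eqref{MK:fg} is correct and coincides with the paper's. For the converse you take a genuinely different route: you build the majorant $\phi(s)=\sup\{f(x):0<g(x)\le s\}$ and then try to boost it, whereas the paper (following Suzuki) works from the Meir--Keeler data directly, setting $\beta(t)=\inf\{\epsilon>0:t\le\epsilon+\alpha(\epsilon)\}$ with $\alpha(\epsilon)$ a chosen Meir--Keeler gap, and then $\phi_1(t)=\beta(t)$ or $\tfrac12(\beta(t)+t)$ according to whether the infimum is attained. The point of the paper's construction is that $\phi_1$ is already a \emph{strict} majorant of type~(L), so passing to the monotone envelope $\phi_2(t)=\sup_{s\le t}\phi_1(s)$ and then the right-continuous envelope $l(t)=\inf_{s>t}\phi_2(s)$ finishes the proof with no further case analysis.

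Your $\phi^+$, by contrast, gives only $f(x)\le\phi^+(g(x))$, and the ``assembly'' you invoke to recover strictness while preserving type~(L), monotonicity, and right continuity is the crux---and it is not carried out. Concretely: averaging with the identity destroys type~(L) at every $s_0$ with $\phi^+(s_0)=s_0$; you propose to keep $l$ flat at value $s_0$ on a right-neighborhood there, but then the transition from the flat value $s_0$ back to the boosted value $\tfrac12(\phi^+(t)+t)>s_0$ is an upward jump, so right continuity fails at that boundary. One might attempt a further right-continuous envelope, but then all four properties must be re-checked on the new function, and the interaction between the flat pieces (indexed by the diagonal set $D=\{s:\phi^+(s)=s\}$, which is only right-discrete) and the boosted pieces is delicate. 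The sentence ``assemble the boosted and flat pieces\dots and verify type~(L) level by level'' names the problem without solving it; this step carries real content that your sketch does not supply.
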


\begin{proof}
 We first prove \eqref{Lim:fg} $\Rightarrow$ \eqref{MK:fg}. 
 Let $\epsilon> 0$. Since $l$ is of type (L), there exists $\delta >0$
 such that $l(t) \leq \epsilon$ for all 
 $t \in [\epsilon, \epsilon + \delta]$. 
 Let $x \in K$ with $\epsilon \leq g(x) < \epsilon + \delta$. 
 Then $g(x) \ne 0$. 
 Thus it follows from~\eqref{Lim:fg} that 
 $f(x) < l \bigl( g(x) \bigr) \leq \epsilon$. 

 We next prove \eqref{MK:fg} $\Rightarrow$ \eqref{Lim:fg}
 and the ``Moreover'' part. 
 We follow the proof of \cite{MR2196804}*{Proposition 1}. 
 By assumption, for any $\epsilon>0$ there exists $\alpha(\epsilon) >0$
 such that 
 \begin{equation}\label{183605}
  x\in K,\, \epsilon \leq g(x) < \epsilon + 2 \alpha(\epsilon)
 \Rightarrow f(x) < \epsilon.
 \end{equation}
 Since 
 $\{ \epsilon > 0\colon t \leq \epsilon + \alpha(\epsilon)\}\ne
 \emptyset$ for all $t>0$, 
 we can define a function $\beta \colon (0,\infty) \to [0,\infty)$ by
 \[
 \beta(t) = \inf \{ \epsilon > 0\colon t \leq \epsilon +
 \alpha(\epsilon)\}
 \]
 for $t > 0$.
 Then it is clear that $\beta$ is nondecreasing, 
 $\beta(t) \leq t$ for all $t >0$,  and moreover, 
 $\min \{ \epsilon > 0\colon t \leq \epsilon +
 \alpha(\epsilon)\}$ exists for all $t>0$ with $\beta(t) = t$. 
 Let $\phi_1 \colon (0,\infty) \to [0,\infty)$ be a function defined by
 \[
 \phi_1(t) = \begin{cases}
	      \beta(t) & \text{if } \min \{ \epsilon >0 \colon 
	      t \leq \epsilon + \alpha(\epsilon)\} \text{ exists}; \\
	      \dfrac{\beta(t) + t}2 & \text{otherwise}
	     \end{cases}
 \]
 for $t > 0$.
 Then we verify the following: 
 \begin{itemize}
  \item[(i)] $\phi_1(t) > 0$ for all $t>0$;
  \item[(ii)] $\phi_1$ is of type~(L);
  \item[(iii)] $f(x) < \phi_1 \bigl( g(x) \bigr)$
	       for all $x \in K$ with $g(x) \ne 0$. 
 \end{itemize}
 By the definition of $\phi_1$, (i) is clear.
 We show (ii). Let $s > 0$ be fixed. 
 Suppose that $\phi_1 (t) \leq s$ for all $t \in (s,s+\alpha(s)]$. 
 Then setting $\delta = \alpha(s)$, we conclude that 
 \begin{equation}\label{152553}
  t \in [s, s+ \delta] \Rightarrow \phi_1(t) \leq s.
 \end{equation}
 On the other hand, 
 suppose that there exists $\sigma \in (s, s+ \alpha(s)]$ such that
 $\phi_1(\sigma) > s$. Then $s \in \{\epsilon>0 \colon \sigma \leq
 \epsilon + \alpha(\epsilon)\}$, and hence $\beta(\sigma) \leq s$. 
 If $\beta(\sigma) = s$, then we have
 $\beta(\sigma) = \min \{ 
 \epsilon> 0\colon \sigma \leq \epsilon + \alpha (\epsilon)\}$, 
 and thus
 \[
  \phi_1 (\sigma) = \beta(\sigma) = s < \phi_1 (\sigma),
 \]
 which is a contradiction. Consequently, we know that 
 \[
  \beta (\sigma) < s < \phi_1(\sigma) = \dfrac{\beta(\sigma) + \sigma}2.
 \]
 Taking into account the definition of $\beta(\sigma)$, 
 we can choose $u \in [\beta(\sigma), s)$ with $\sigma \leq u +
 \alpha(u)$.
 Then set $\delta = s-u$ and let $t \in [s, s+\delta]$. 
 Since 
 \[
  t \leq s + \delta = 2s - u < 2\cdot \dfrac{\beta(\sigma) + \sigma}2 
 - \beta(\sigma) = \sigma \leq u+ \alpha(u),
 \]
 it follows that $\beta(t)\leq u$. 
 Therefore we have
 \[
  \phi_1 (t) \leq \dfrac{\beta(t) + t}2 \leq \dfrac{u+s+\delta}2 = s.
 \]
 Thus \eqref{152553} holds, and hence
 $\phi_1$ is of type (L). 
 We next show (iii). 
 Let $x \in K$ with $g(x) \ne 0$. 
 Taking into account the definition of $\phi_1$, 
 we know that for any $t>0$ 
 there exists $\epsilon \in (0, \phi_1(t)]$ such that 
 $\epsilon \leq t \leq \epsilon + \alpha (\epsilon)$,
 and thus there exists 
 $\epsilon \in \left(0, \phi_1 \bigl( g(x) \bigr) \right]$ such
 that $\epsilon \leq g(x) \leq \epsilon + \alpha (\epsilon)$. 
 Hence we deduce from \eqref{183605} that 
 $f(x) < \epsilon \leq \phi_1 \bigl( g(x) \bigr)$. 
 Consequently, (iii) holds. 
 Now let us define functions $\phi_2\colon (0,\infty) \to \R_+$ and 
 $l \colon (0,\infty) \to \R_+$ by
 \[
 \phi_2(t)= \sup\{\phi_1(s) \colon s \leq t\}
 \text{ and }
 l(t)= \inf\{\phi_2(s) \colon s > t\}
 \]
 for $t\in (0,\infty)$. 
 Then it is not hard to check that 
 $\phi_2$ and $l$ are well-defined and nondecreasing, 
 and moreover, 
 \[
 0 < \phi_1(t) \leq \phi_2(t)  \leq l(t) \leq t
 \]
 for all $t>0$.
 Thus it follows from~(ii) and~(iii) that
 $l$ is of type~(L)
 and $f(x) < l \bigl( g(x) \bigr)$ for all $x \in K$ with 
 $g(x) \ne 0$. 
 We can also verify that 
 $l$ is right continuous.
 This completes the proof. 
\end{proof}

\begin{lemma}\label{l:mks2gamma}
 The condition \eqref{MKs:fg} in Theorem~\ref{t:fg} implies
 the condition \eqref{Gamma:fg} in Theorem~\ref{t:fg}. 
\end{lemma}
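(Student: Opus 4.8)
The plan is to produce the function $\gamma$ required by \eqref{Gamma:fg} explicitly from the data supplied by \eqref{MKs:fg}. First I would invoke \eqref{MKs:fg} to choose, for every $\epsilon > 0$, a number $\delta(\epsilon) > 0$ such that $x \in K$ and $g(x) < \epsilon + \delta(\epsilon)$ imply $f(x) < \epsilon$. The useful form of this is its contrapositive: for every $x \in K$,
\[
f(x) \geq \epsilon \quad\Longrightarrow\quad g(x) \geq \epsilon + \delta(\epsilon).
\]
This already suggests that the size of $g(x)$ is controlled from below in terms of the size of $f(x)$, which is exactly what \eqref{Gamma:fg} encodes.

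With this selection fixed, I would define $\gamma \colon \R_+ \to [0,\infty]$ by $\gamma(0) = 0$ and
\[
\gamma(s) = \sup\{\, \epsilon + \delta(\epsilon) \colon 0 < \epsilon \leq s \,\}
\]
for $s > 0$. The supremum is taken over a nonempty subset of $\R_+$ and so lands in $[0,\infty]$, making $\gamma$ well-defined; it is nondecreasing because enlarging $s$ only enlarges the index set over which the supremum is taken (and $\gamma(0) = 0$ sits below all positive values since $\gamma(s) > 0$ for $s > 0$).

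It then remains to verify the two defining properties. For $\gamma(s) > s$ when $s > 0$, I would take $\epsilon = s$ inside the supremum to get $\gamma(s) \geq s + \delta(s) > s$. For $\gamma\bigl(f(x)\bigr) \leq g(x)$, the case $f(x) = 0$ is immediate from $\gamma(0) = 0 \leq g(x)$; and when $f(x) = s > 0$, every $\epsilon \in (0, s]$ satisfies $f(x) \geq \epsilon$, so the contrapositive above gives $g(x) \geq \epsilon + \delta(\epsilon)$. Thus $g(x)$ is an upper bound for the set defining $\gamma(s)$, whence $g(x) \geq \gamma(s) = \gamma\bigl(f(x)\bigr)$.

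I do not expect a genuine obstacle: the construction is essentially just the contrapositive of \eqref{MKs:fg} repackaged as a single monotone function, and the only points needing care are the bookkeeping at $s = 0$ and the observation that the supremum might a priori equal $+\infty$. The latter is harmless, since the codomain of $\gamma$ is $[0,\infty]$ and, on any value $s = f(x)$ actually attained with $s > 0$, the bound $g(x) \geq \gamma\bigl(f(x)\bigr)$ together with the finiteness of $g(x)$ forces $\gamma\bigl(f(x)\bigr)$ to be finite, so the required inequality is never vacuous.
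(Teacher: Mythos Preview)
Your proof is correct. The construction and the verification of the three properties all go through as you describe.

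The paper takes a different, more intrinsic route: it defines
\[
\gamma(t)=\inf\{\,g(x)\colon x\in K,\ f(x)\ge t\,\}\qquad(\inf\emptyset=\infty),
\]
which is automatically nondecreasing and satisfies $\gamma\bigl(f(x)\bigr)\le g(x)$ by definition; the only work is a short contradiction argument for $\gamma(t)>t$, which is where condition~\eqref{MKs:fg} is invoked. Your approach instead fixes a choice function $\epsilon\mapsto\delta(\epsilon)$ up front and builds $\gamma$ from it, so the inequality $\gamma(s)>s$ is immediate while the bound $\gamma\bigl(f(x)\bigr)\le g(x)$ requires the contrapositive step. The paper's $\gamma$ has the advantage of being canonical (independent of any choice of $\delta$) and in fact is the largest function satisfying $\gamma\bigl(f(x)\bigr)\le g(x)$; your $\gamma$ has the advantage that all verifications are direct, with no argument by contradiction. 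Both are short and either would serve equally well here.
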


\begin{proof}
 Define a function $\gamma \colon \R_+ \to [0,\infty]$ by
 \[
 \gamma(t) = \inf \{g(x)\colon x\in K,\, f(x)\geq t\}
 \]
 for $t \in \R_+$, where $\inf \emptyset = \infty$. 
 Then the function $\gamma$ is well-defined and nondecreasing, 
 and moreover, 
 $\gamma \bigl( f(x) \bigr) \leq g(x)$ for all $x \in K$. 
 Hence it is enough to show that $\gamma(t) > t$ for all $t > 0$. 
 Suppose that $\gamma(t) \leq t$ for some $t >0$. 
 Then, by assumption, there exists $\delta>0$ such that
 $x \in K$ and $g(x) < t + \delta$ imply $f(x) < t$. 
 Since $\gamma(t) < t + \delta$, 
 there exists $y \in K$ such that $f(y)\geq t$ and $g(y)< t + \delta$.
 Therefore we have $t \leq f(y) < t$, which is a contradiction. 
\end{proof}

\begin{lemma}\label{l:gamma2w-finite}
 The condition \eqref{Gamma:fg} in Theorem~\ref{t:fg}
 implies the condition \eqref{Wong-finite:fg} in Theorem~\ref{t:fg}. 
\end{lemma}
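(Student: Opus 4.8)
The plan is to obtain $w$ by truncating $\gamma$ so as to force finiteness while keeping every other property for free. Comparing \eqref{Gamma:fg} with \eqref{Wong-finite:fg}, the function $\gamma$ already satisfies $\gamma(s) > s$ for all $s>0$ and $\gamma\bigl(f(x)\bigr) \leq g(x)$ for all $x \in K$, and it is nondecreasing, hence right lower semicontinuous on $(0,\infty)$ by the observation recorded in the preliminaries that every nondecreasing function is right lower semicontinuous. The only discrepancy between the two conditions is that $\gamma$ is permitted to take the value $\infty$, whereas \eqref{Wong-finite:fg} demands a function $w$ with values in $\R_+$. Thus the entire task reduces to replacing $\gamma$ by a finite-valued function without destroying the three properties above.

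First I would define $w \colon \R_+ \to \R_+$ by $w(t) = \min\bigl\{\gamma(t),\, t+1\bigr\}$ for $t \in \R_+$. Since $0 \leq w(t) \leq t+1 < \infty$, the function $w$ is well-defined and takes values in $\R_+$. Next I would verify the three required properties in turn. Monotonicity: both $\gamma$ and $t \mapsto t+1$ are nondecreasing, and the pointwise minimum of two nondecreasing functions is again nondecreasing, so $w$ is nondecreasing and therefore right lower semicontinuous on $(0,\infty)$. Strict inequality: for $t>0$ both $\gamma(t) > t$ and $t+1 > t$, so their minimum $w(t)$ exceeds $t$. Domination: for every $x \in K$ one has $w\bigl(f(x)\bigr) \leq \gamma\bigl(f(x)\bigr) \leq g(x)$, using that the minimum is at most its first argument together with \eqref{Gamma:fg}.

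I do not expect a genuine obstacle here, since the lemma is essentially a finiteness fix. The only points that warrant a moment's care are that the minimum of two nondecreasing functions is nondecreasing, so that right lower semicontinuity is inherited through the preliminaries' observation rather than argued directly for a minimum of semicontinuous functions, and that truncating by $t+1$ rather than by a constant is precisely what preserves the strict inequality $w(t) > t$ at every $t>0$ while still bounding $w$ above by a finite quantity. Any finite nondecreasing majorant of the identity lying strictly above it would serve equally well in place of $t+1$.
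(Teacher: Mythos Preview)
Your proof is correct and, in fact, cleaner than the paper's. You take $w(t)=\min\{\gamma(t),\,t+1\}$ once and for all; the paper instead splits into cases according to whether $\{t:\gamma(t)=\infty\}$ is empty and, when it is not, according to whether $\gamma(t_0)<\infty$ or $\gamma(t_0)=\infty$ at the infimum $t_0$ of that set, patching $\gamma$ beyond $t_0$ by an affine or linear function. Both arguments exploit that a nondecreasing function is automatically right lower semicontinuous, so neither has to verify semicontinuity directly. Your single-line truncation sidesteps the case distinction entirely, at the cost of not preserving $\gamma$ exactly on the interval where it is already finite; the paper's version keeps $w=\gamma$ wherever possible, which is closer to Lim's original construction but offers no extra payoff for the lemma at hand.
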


\begin{proof}
 We follow the idea of the proof of \cite{MR1845580}*{Theorem 1}. 
 If $\{ t \in \R_+ \colon \gamma(t) = \infty\}$ is empty, 
 then we easily obtain the conclusion. 
 Thus we may assume that 
 $\{ t \in \R_+ \colon \gamma(t) = \infty\}$ is nonempty. 
 Set $t_0 = \inf \{ t \in \R_+ \colon \gamma(t) = \infty\}$.
 In the case of $\gamma(t_0) < \infty$, 
 let $w_1 \colon \R_+ \to \R_{+}$ be a function defined by
 \[
 w_1 (t) = \begin{cases}
	 \gamma(t) & \text{if }t\in [0,t_0]; \\
	 \gamma(t_0) + t - t_0 & \text{otherwise}.
	\end{cases}
 \]
 Then it is clear that 
 $w_1(s) > s$ for all $s>0$.
 Since $w_1$ is nondecreasing,
 we know that $w_1$ is right lower
 semicontinuous on $(0,\infty)$. 
 We can also check that 
 $w_1 \bigl( f(x)\bigr) \leq g(x)$ for all $x \in K$.
 On the other hand, in the case of $\gamma(t_0) = \infty$, 
 let $w_2 \colon \R_+ \to \R_{+}$ be a function defined by
 \[
 w_2(t) = \begin{cases}
	 \gamma(t) & \text{if }t\in [0,t_0), \\
	 2t & \text{otherwise}. 
	\end{cases}
\]
 Then it is clear that 
 $w_2(s) > s$ for all $s>0$.
 Since 
 $w_2$ is nondecreasing on $(0,t_0)$
 and continuous on $[t_0,\infty)$,
 we know that $w_2$ is right lower semicontinuous on $(0,\infty)$. 
 We can also check that 
 $w_2 \bigl( f(x)\bigr) \leq g(x)$ for all $x \in K$.
\end{proof}

\begin{lemma}\label{l:w-finite2MKs}
 The condition \eqref{Wong-finite:fg} in Theorem~\ref{t:fg}
 implies the condition  \eqref{MKs:fg} in Theorem~\ref{t:fg}. 
\end{lemma}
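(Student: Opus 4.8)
The plan is to prove the implication directly: fix $\epsilon > 0$ and manufacture a single $\delta > 0$ so that $x \in K$ and $g(x) < \epsilon + \delta$ force $f(x) < \epsilon$. The only tools available are that $w$ is finite-valued, satisfies $w(s) > s$ for all $s > 0$, is right lower semicontinuous on $(0,\infty)$, and dominates $g$ through $f$ via $w\bigl(f(x)\bigr) \le g(x)$. The guiding idea is that the strict gap $w(\epsilon) > \epsilon$ at the single point $\epsilon$ can be spread over an entire right neighborhood of $\epsilon$ by right lower semicontinuity, and the inequality $w\bigl(f(x)\bigr) \le g(x)$ then converts this gap into the required bound on $g$.

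Concretely, I would first set $\rho = w(\epsilon) - \epsilon$, which is a genuine positive real number precisely because $w$ takes values in $\R_+$ (the finiteness built into \eqref{Wong-finite:fg} is what makes $\rho$ meaningful here). Applying right lower semicontinuity of $w$ at $t_0 = \epsilon$ with tolerance $\rho/2$ yields $\delta_1 > 0$ such that $w(s) > w(\epsilon) - \rho/2 = \epsilon + \rho/2$ for every $s \in [\epsilon, \epsilon + \delta_1)$. I then put $\delta = \min\{\rho/2,\, \delta_1\} > 0$ and claim this $\delta$ works. To verify it, take $x \in K$ with $g(x) < \epsilon + \delta$ and suppose toward a contradiction that $f(x) \ge \epsilon$. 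I split into two cases by the size of $f(x)$: if $\epsilon \le f(x) < \epsilon + \delta_1$, then the semicontinuity bound gives $w\bigl(f(x)\bigr) > \epsilon + \rho/2 \ge \epsilon + \delta$; if instead $f(x) \ge \epsilon + \delta_1$, then the global inequality $w(s) > s$ gives $w\bigl(f(x)\bigr) > f(x) \ge \epsilon + \delta_1 \ge \epsilon + \delta$. In either case $g(x) \ge w\bigl(f(x)\bigr) > \epsilon + \delta$, contradicting $g(x) < \epsilon + \delta$. Hence $f(x) < \epsilon$, which establishes \eqref{MKs:fg}.

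The step I expect to be the crux is the case split. The bound coming from right lower semicontinuity controls $w$ only on the right neighborhood $[\epsilon, \epsilon + \delta_1)$, whereas $f(x)$ may be arbitrarily large; the global property $w(s) > s$ is exactly what is needed to dominate $w\bigl(f(x)\bigr)$ for large $f(x)$, and the two estimates must be reconciled by taking $\delta$ no larger than both $\rho/2$ and $\delta_1$. Everything else is a routine chain of inequalities, and no compactness or completeness enters — the argument rests entirely on the interaction between the pointwise gap $w(\epsilon) > \epsilon$ and the one-sided continuity of $w$ at $\epsilon$.
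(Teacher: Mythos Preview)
Your proof is correct. The case split is handled cleanly: the choice $\delta=\min\{\rho/2,\delta_1\}$ is exactly what makes both branches close, and the inequality $w\bigl(f(x)\bigr)\le g(x)$ does the rest.

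The paper argues differently, by contradiction at the level of the whole statement: it assumes \eqref{MKs:fg} fails for some $\epsilon>0$, extracts a sequence $\{x_n\}$ with $g(x_n)<\epsilon+1/n$ and $f(x_n)\ge\epsilon$, squeezes $\epsilon\le f(x_n)<w\bigl(f(x_n)\bigr)\le g(x_n)<\epsilon+1/n$ to force $f(x_n)\to\epsilon$ and $w\bigl(f(x_n)\bigr)\to\epsilon$, and then invokes the sequential characterization of right lower semicontinuity to get $\epsilon<w(\epsilon)\le\liminf_n w\bigl(f(x_n)\bigr)=\epsilon$. Your route is more constructive---you produce an explicit $\delta$ from the $\epsilon$--$\delta$ definition of right lower semicontinuity rather than the sequential one---and it makes visible exactly where the finiteness of $w$ and the global inequality $w(s)>s$ are each used (the former to define $\rho$, the latter to cover the ``large $f(x)$'' branch). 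The paper's argument is a line or two shorter because the squeeze automatically rules out large $f(x)$, but the underlying mechanism is the same: the strict gap $w(\epsilon)>\epsilon$ combined with one-sided continuity at $\epsilon$.
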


\begin{proof}
 Suppose that \eqref{MKs:fg} does not hold. 
 Then there exist $\epsilon>0$ and a sequence $\{x_n\}$ in $K$ such that 
 $g(x_n)<  \epsilon + 1/n$ and $f(x_n) \geq \epsilon$ for all $n \in \N$. 
 Since $f(x_n) > 0$, it follows from the properties of $w$ that
 \[
  \epsilon \leq f(x_n) < w\bigl( f(x_n)\bigr) \leq g(x_n) <
 \epsilon + 1/n
 \]
 for all $n \in \N$. 
 Hence $f(x_n) \to \epsilon$ and $w\bigl( f(x_n) \bigr) \to \epsilon$. 
 Since $w$ is right lower semicontinuous at $\epsilon$ and 
 $\epsilon < w(\epsilon)$, 
 we have 
 $\epsilon < w(\epsilon) \leq \liminf_n w\bigl( f(x_n) \bigr) =
 \epsilon$, which is a contradiction. 
\end{proof}

\begin{lemma}\label{l:MK2MKs}
 Suppose $g^{-1}(0) \subset f^{-1} (0)$. 
 Then the condition \eqref{MK:fg} in Theorem~\ref{t:fg}
 implies the condition \eqref{MKs:fg} in Theorem~\ref{t:fg}. 
\end{lemma}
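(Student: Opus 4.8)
The plan is to exploit the fact that the only gap between \eqref{MK:fg} and \eqref{MKs:fg} is the missing lower bound $\epsilon \leq g(x)$: condition \eqref{MKs:fg} demands the conclusion $f(x) < \epsilon$ even for those $x$ with $g(x) < \epsilon$, which \eqref{MK:fg} says nothing about directly. I would bridge this gap by first extracting from \eqref{MK:fg} (together with the standing hypothesis $g^{-1}(0) \subset f^{-1}(0)$) the pointwise comparison $f(x) \leq g(x)$ for every $x \in K$, and then splicing this bound together with \eqref{MK:fg} according to whether $g(x)$ lies below or above $\epsilon$.

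To establish the comparison $f(x) \leq g(x)$, I would argue by cases on the value $g(x)$. If $g(x) = 0$, then the assumption $g^{-1}(0) \subset f^{-1}(0)$ forces $f(x) = 0$, so the inequality holds. If $g(x) = t > 0$, I would apply \eqref{MK:fg} with the particular choice $\epsilon = t$ to obtain some $\delta > 0$; since then $\epsilon \leq g(x) = t < \epsilon + \delta$, condition \eqref{MK:fg} yields $f(x) < t = g(x)$. Thus $f(x) \leq g(x)$ for all $x \in K$.

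With this in hand, verifying \eqref{MKs:fg} is immediate. Given $\epsilon > 0$, I would take the $\delta > 0$ supplied by \eqref{MK:fg} for this $\epsilon$, and consider any $x \in K$ with $g(x) < \epsilon + \delta$. If $\epsilon \leq g(x)$, then $\epsilon \leq g(x) < \epsilon + \delta$, so \eqref{MK:fg} gives $f(x) < \epsilon$ directly. If instead $g(x) < \epsilon$, then the comparison from the previous step gives $f(x) \leq g(x) < \epsilon$. In either case $f(x) < \epsilon$, which is exactly \eqref{MKs:fg}.

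The argument is short and presents no genuine analytic difficulty; the one point that genuinely matters is the treatment of the set $g^{-1}(0)$, where the hypothesis $g^{-1}(0) \subset f^{-1}(0)$ is indispensable. Indeed, Example~\ref{e:MK/=MKs} shows that dropping this hypothesis allows a point with $g(x) = 0$ and $f(x) > 0$, which defeats precisely the $g(x) < \epsilon$ case above, so the only real care needed is to ensure that case is handled via the hypothesis rather than via \eqref{MK:fg} alone.
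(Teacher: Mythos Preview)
Your proof is correct and follows essentially the same approach as the paper: both handle the case $g(x) < \epsilon$ by applying \eqref{MK:fg} with the choice $\epsilon' = g(x)$ (your $t$) to deduce $f(x) < g(x)$, and both invoke $g^{-1}(0) \subset f^{-1}(0)$ for the boundary case $g(x)=0$. The only cosmetic difference is that you isolate the universal inequality $f(x) \leq g(x)$ as a preliminary step, whereas the paper derives it inline for the specific $x$ at hand.
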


\begin{proof}
 Let $\epsilon > 0$ be given. 
 Then, by~\eqref{MK:fg},  
 there exists $\delta > 0$ such that 
 $x \in K$ and $\epsilon \leq g(x) < \epsilon + \delta$ imply
 $f(x) < \epsilon$. 
 Let $x \in K$ such that $g(x) < \epsilon$. 
 It is enough to show that $f(x) < \epsilon$. 
 Suppose that $g(x)=0$. Then, by assumption,  $f(x) = 0 < \epsilon$. 
 On the other hand, suppose that $0< g(x) < \epsilon$. 
 Set $\epsilon' = g(x)$. Then, by \eqref{MK:fg}, 
 there exists $\delta' > 0$ such that $y \in K$ 
 and $\epsilon' \leq g(y) < \epsilon' + \delta'$
 imply $f(y) < \epsilon'$. 
 Since $\epsilon' = g(x) < \epsilon' + \delta'$, 
 we have $f(x) < \epsilon' = g(x) < \epsilon$. 
\end{proof}

\begin{lemma}\label{l:Phi-Psi2MK}
 The condition \eqref{Phi-Psi:fg} in Theorem~\ref{t:fg} 
 implies the condition \eqref{MK:fg} in Theorem~\ref{t:fg}. 
\end{lemma}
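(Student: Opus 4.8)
The plan is to fix $\epsilon>0$ and turn the single-point strict inequality $\phi(\epsilon)>\psi(\epsilon)$ into the $\delta$ demanded by \eqref{MK:fg}, using the right upper semicontinuity of $\psi$ at $\epsilon$ as the engine. The guiding idea is that on a short right neighbourhood of $\epsilon$ the quantity $\psi\bigl(g(x)\bigr)$ stays below $\phi(\epsilon)$, while $f(x)\ge\epsilon$ would force $\phi\bigl(f(x)\bigr)\ge\phi(\epsilon)$ by monotonicity of $\phi$; feeding these into the assumed inequality $\phi\bigl(f(x)\bigr)\le\psi\bigl(g(x)\bigr)$ produces a contradiction, leaving only $f(x)<\epsilon$.

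Concretely, first I would choose $\eta>0$ with $\psi(\epsilon)+\eta\le\phi(\epsilon)$: if $\phi(\epsilon)<\infty$ take $\eta=\phi(\epsilon)-\psi(\epsilon)$, which is positive precisely because $\phi(\epsilon)>\psi(\epsilon)$, and if $\phi(\epsilon)=\infty$ take any $\eta>0$, say $\eta=1$. Since $\psi$ is right upper semicontinuous at $\epsilon$, by definition there exists $\delta>0$ such that $\psi(s)<\psi(\epsilon)+\eta$ for all $s\in[\epsilon,\epsilon+\delta)$. I would then claim this $\delta$ works: for $x\in K$ with $\epsilon\le g(x)<\epsilon+\delta$ we have $g(x)\in[\epsilon,\epsilon+\delta)$, so combining \eqref{Phi-Psi:fg} with the choice of $\delta$ gives
\[
\phi\bigl(f(x)\bigr)\le\psi\bigl(g(x)\bigr)<\psi(\epsilon)+\eta\le\phi(\epsilon).
\]
Were $f(x)\ge\epsilon$, monotonicity of $\phi$ would yield $\phi\bigl(f(x)\bigr)\ge\phi(\epsilon)$, contradicting the displayed strict inequality; hence $f(x)<\epsilon$, which is exactly the conclusion of \eqref{MK:fg}.

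The only point requiring care, and the main obstacle, is the admissibility of the value $\phi(\epsilon)=+\infty$, in which case $\phi(\epsilon)-\psi(\epsilon)$ is not a usable real number and the inequality chain must be read in $[0,\infty]$; this is why I split the choice of $\eta$ into two cases. Note that $\psi\bigl(g(x)\bigr)$ is automatically a finite nonnegative real number, since $\psi$ takes values in $\R_+$, so the comparison $\phi\bigl(f(x)\bigr)\le\psi\bigl(g(x)\bigr)<\phi(\epsilon)$ remains meaningful and strict even when $\phi(\epsilon)=\infty$, where $\psi(\epsilon)+\eta\le\phi(\epsilon)$ holds trivially. Beyond this bookkeeping I do not expect any genuine difficulty.
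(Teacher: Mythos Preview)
Your proof is correct and follows essentially the same approach as the paper's: both hinge on the monotonicity of $\phi$ together with the right upper semicontinuity of $\psi$ at $\epsilon$ to rule out $f(x)\ge\epsilon$. The only cosmetic difference is that you argue directly via the $\epsilon$--$\delta$ definition of right upper semicontinuity to produce $\delta$, whereas the paper argues by contradiction using the sequential (limsup) characterization.
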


\begin{proof}
 Suppose that \eqref{MK:fg} does not hold. 
 Then there exist $\epsilon >0$ and a sequence $\{x_n\}$ in $K$
 such that $\epsilon \leq g(x_n) < \epsilon + 1/n$ and $f(x_n) \geq
 \epsilon$ for all $n \in \N$.
 Thus $g(x_n) \to \epsilon$ and, by assumption, 
 \[
 \psi(\epsilon) < \phi(\epsilon) \leq \phi \bigl( f(x_n) \bigr) \leq
 \psi \bigl( g(x_n) \bigr)  
 \]
 for all $n \in \N$.
 Since $\psi$ is right upper semicontinuous at $\epsilon$, 
 we conclude that 
 $\psi(\epsilon) < \phi(\epsilon) \leq \limsup_n \psi \bigl( g(x_n) \bigr)
 \leq \psi(\epsilon)$, which is a contradiction.
\end{proof}

\section*{Acknowledgment}

The first author would like to acknowledge the financial support from
Professor Kaoru Shimizu of Chiba University. 

\begin{bibdiv}
\begin{biblist}

\bib{MR3346760}{article}{
      author={Ben-El-Mechaiekh, Hichem},
       title={The {R}an-{R}eurings fixed point theorem without partial order: a
  simple proof},
        date={2014},
        ISSN={1661-7738},
     journal={J. Fixed Point Theory Appl.},
      volume={16},
       pages={373\ndash 383},
         url={https://doi.org/10.1007/s11784-015-0218-3},
}

\bib{gavruta2014two}{article}{
      author={Gavruta, L},
      author={Gavruta, P},
      author={Khojasteh, F},
       title={Two classes of meir-keeler contractions},
        date={2014},
     journal={arXiv preprint arXiv:1405.5034},
}

\bib{MR1845580}{article}{
      author={Lim, Teck-Cheong},
       title={On characterizations of {M}eir-{K}eeler contractive maps},
        date={2001},
        ISSN={0362-546X},
     journal={Nonlinear Anal.},
      volume={46},
       pages={113\ndash 120},
         url={https://doi.org/10.1016/S0362-546X(99)00448-4},
}

\bib{MR0250291}{article}{
      author={Meir, A.},
      author={Keeler, Emmett},
       title={A theorem on contraction mappings},
        date={1969},
        ISSN={0022-247x},
     journal={J. Math. Anal. Appl.},
      volume={28},
       pages={326\ndash 329},
         url={https://doi.org/10.1016/0022-247X(69)90031-6},
}

\bib{MR2212687}{article}{
      author={Nieto, Juan~J.},
      author={Rodr\'{\i}guez-L\'{o}pez, Rosana},
       title={Contractive mapping theorems in partially ordered sets and
  applications to ordinary differential equations},
        date={2005},
        ISSN={0167-8094},
     journal={Order},
      volume={22},
       pages={223\ndash 239 (2006)},
         url={https://doi.org/10.1007/s11083-005-9018-5},
}

\bib{MR2053350}{article}{
      author={Ran, Andr\'{e} C.~M.},
      author={Reurings, Martine C.~B.},
       title={A fixed point theorem in partially ordered sets and some
  applications to matrix equations},
        date={2004},
        ISSN={0002-9939},
     journal={Proc. Amer. Math. Soc.},
      volume={132},
       pages={1435\ndash 1443},
         url={https://doi.org/10.1090/S0002-9939-03-07220-4},
}

\bib{reich2017monotone}{article}{
      author={Reich, S},
      author={Zaslavski, AJ},
       title={Monotone contractive mappings},
        date={2017},
     journal={J. Nonlinear Var. Anal},
      volume={1},
       pages={391\ndash 401},
}

\bib{MR2196804}{article}{
      author={Suzuki, Tomonari},
       title={Fixed-point theorem for asymptotic contractions of
  {M}eir-{K}eeler type in complete metric spaces},
        date={2006},
        ISSN={0362-546X},
     journal={Nonlinear Anal.},
      volume={64},
       pages={971\ndash 978},
         url={https://doi.org/10.1016/j.na.2005.04.054},
}

\bib{MR644645}{article}{
      author={Wong, Chi~Song},
       title={Characterizations of certain maps of contractive type},
        date={1977},
        ISSN={0030-8730},
     journal={Pacific J. Math.},
      volume={68},
       pages={293\ndash 296},
         url={http://projecteuclid.org/euclid.pjm/1102817386},
}

\end{biblist}
\end{bibdiv}

\end{document}